\newtheorem{lemma}{Lemma}[section]
\newtheorem{theorem}[lemma]{Theorem}
\newtheorem{cor}[lemma]{Corollary}
\newtheorem{fact}[lemma]{Fact}
\newtheorem{definition}[lemma]{Definition}
\newtheorem{question}[lemma]{Question}
\newtheorem{pro}[lemma]{Proposition}
\newtheorem{problem}[lemma]{Problem}
\newtheorem{exa}[lemma]{Example}
\newtheorem{rmk}[lemma]{Remark}
\newenvironment{proof}{\noindent {\sl Proof.}}{$\Box$ \bigskip}
\newcommand{\T}{{\mathbb T}}
\newcommand{\cont}{{\mathfrak c}}
\newcommand{\Z}{{\mathbb Z}}
\newcommand{\N}{{\mathbb N}}
\newcommand{\aaa}{{\mathbf a}}
\newcommand{\uuu}{{\mathbf u}}
\newcommand{\vvv}{{\mathbf v}}
\newcommand{\Tv}{\mathcal{T}_\mathbf{v}}
\newcommand{\Tu}{\mathcal{T}_\mathbf{u}}
\newcommand{\Tub}{{\tau}_\mathbf{u}}
\newcommand{\Tvb}{{\tau}_\mathbf{v}}
\def\T{{\mathbb T}}
\def\Z{{\mathbb Z}}
\def\N{{\mathbb N}}
\def\R{{\mathbb R}}
\def\(SB){{\mathfrak B}}
\begin{document}

%\begin{frontmatter}

\title{Characterizing sequences for precompact group topologies}

\author{D. Dikranjan,
\\{\footnotesize Dipartimento di Matematica e Informatica,}
\\{\footnotesize Universit\`{a} di Udine, Via delle Scienze  206,}
\\{\footnotesize  Localit\` a Rizzi}
\\{\footnotesize  33100 Udine, Italy}
 \\ {\footnotesize {\tt dikran.dikranjan@uniud.it}}
\and S. S. Gabriyelyan
\\{\footnotesize Department of Mathematics,}
\\{\footnotesize Ben-Gurion University of the Negev,}
\\{\footnotesize  Beer-Sheva, P.O. 653,}
\\{\footnotesize  Israel}
\\{\footnotesize  e-mail: saak@math.bgu.ac.il}
\and V. Tarieladze\thanks{The third  named author was partially supported  by grant GNSF/ST$09_{-}99_{-}3-104$}
\\{\footnotesize  Niko Muskhelishvili Institute of Computational Mathematics}
\\{\footnotesize  of the Georgian Technical University}
\\{\footnotesize  8, Akuri str. 0160 Tbilisi}
\\{\footnotesize   Georgia}
\\{\footnotesize  e-mail: vajatarieladze@yahoo.com}
}
\date{}

\maketitle

\begin{abstract}

A precompact group topology $\tau$ on an abelian group $G$ is called {\em single sequence characterized} (for short,
{\em ss-characterized}) if there is a sequence $\uuu= (u_n)$ in $G$
such that  $\tau$ is  the finest  precompact group topology on $G$ making  $\uuu=(u_n)$ converge to zero.
It is proved that a metrizable precompact abelian group  $(G,\tau)$ is $ss$-characterized iff it is countable. For every
metrizable precompact group topology $\tau$ on a  countably  infinite  abelian group $G$  there exists a group topology $\eta$ such
 that $\eta$ is strictly finer than $\tau$ and the groups $(G,\tau)$ and $(G,\eta)$ have the equal Pontryagin dual groups.
We give a complete description of all $ss$-characterized precompact abelian groups modulo countable $ss$-characterized groups from which we derive:

(1) No infinite pseudocompact abelian group  is $ss$-characterized.

(2)  An $ss$-characterized  precompact abelian group is hereditarily disconnected.
\end{abstract}

\noindent
{\em Keywords}: precompact group topology, $T$-sequence, $TB$-sequence, characterizing sequence, characterized subgroup, $B$-embedded subgroup, finest precompact extension
\medskip

\noindent{\em  MSC}

22A10, 22A35, 43A05,  43A40

%\end{frontmatter}

\section{Introduction} \label{sec1}

\subsection{Notations and terminology}\label{NoTe}

Usually, all considered groups will be abelian and additive notation will be used. All considered topological groups are Hausdorff.
For a subset $A$ of a group $G$ denote by  $\langle A\rangle$ the subgroup of $G$ generated by  $A$.
An abelian group $G$ endowed with the  discrete topology is denoted by $G_d$.

 For a topological group $(G,\tau)$ we denote by   $\mathcal{N}(G,\tau)$  the filter of all  neighborhoods of  the neutral element of  $G$.
The group of all continuous characters of $(G,\tau)$ we denote by $G^\ast =(G,\tau)^\ast$.
The group $G^\ast $ endowed with the compact open topology is called {\it the Pontryagin dual} of $(G,\tau)$ and it is denoted by $G^\wedge$.
For $x\in G$ we define a mapping $\hat x:G^\wedge \to \T$ by $\hat
x(\chi)=\chi(x)$ for $\chi\in G^\wedge$.  Then $\hat x\in G^{\wedge\wedge}$ for each $x\in G$ and the mapping $G\buildrel{\alpha_G} \over\longrightarrow
G^{\wedge\wedge}$, $\alpha_G(x)=\hat x$, is a group homomorphism, called  {\it the canonical homomorphism}.
The group $G$ is called {\it maximally almost periodic}, for short a $MAP$-group, if $G^\ast $ separates points of $G$, i.e., $\alpha_G$ is injective.
We denote by MAP the class of all $MAP$-groups.
A subgroup $H$ of $G$
is called {\it dually closed} if for every $x\in G\setminus H$ there exists $\chi\in G^\ast$ such that
$\chi(H)=\{0\}$ and $\chi(x)\ne 0$, and it is called {\it dually embedded} if for every $\varphi \in H^\ast$ there exists $\chi\in G^\ast$ such
that $\chi|_{H}=\varphi$.

Let us recall \cite{CR} that every precompact  group topology $\sigma$ on an abelian group $G$ is uniquely determined by the set $G^\ast$ of all continuous characters of $(G,\sigma)$.
Namely, $\sigma$ has the form $T_{G^\ast}$, where $T_{G^\ast} := \sigma(G,G^\ast)$ is the coarsest group topology on $G$ with respect to which all members of $G^\ast$
are continuous. Conversely, if $H$ is an arbitrary dense subgroup of the Pontryagin dual compact group $(G_d)^\wedge$,
then $T_H :=\sigma(G,H)$ is a precompact group topology on $G$ such that $(G, T_H)^\ast =H$ (see Fact \ref{f31}).

 For a topological group $(G,\tau)$  we write  $\tau^+:=\sigma(G, \Gamma)$, where  $\Gamma=(G,\tau)^{\ast}$.
Clearly $\tau^+\le \tau$. The topology $\tau^+$ is often called  {\it the Bohr
modification} of $\tau$.

 A subset $A$ of a topological abelian group $G$ is called {\it quasi-convex} if for every $g\in G\setminus A$ there exists $\chi\in G^\ast$ such that
\[
\chi(A)\subset \T_+,\,\,\,\text{but}\,\,\chi(x)\not\in \T_+ \, ,
\]
where  $\T_+ $ is  the image of the segment $[-\frac{1}{4},\frac{1}{4} ]$ with respect to the natural quotient map $\R\rightarrow \T$. If $A$ is a subset of $G$, then the set
\[
\mathrm{qc}(A) := \bigcap \{ Q : A\subseteq Q \mbox{ and } Q \mbox{ is quasi-convex}\}
\]
is quasi-convex in $G$ and it is called {\it the quasi-convex hull of} $A$.
A topological group $(G,\tau)$ (as well as its topology $\tau$) is called {\it locally quasi-convex} if it has a basis
consisting quasi-convex subsets of $(G,\tau)$. We denote by $\rm{LQC}$ the class of all   locally quasi-convex groups. Clearly, every $LQC$-group is also $MAP$.

For a  MAP abelian topological group $(G,\tau)$ (cf. \cite[6.18]{Aus}) the collection $\{{\rm{qc}}(V):V\in \mathcal{N}(G,\tau)\}$ is a base of a locally quasi-convex group topology $(\tau)_{\rm{lqc}}$ called {\it the locally quasi-convex  modification} of $\tau$.
The topology $(\tau)_{\rm{lqc}}$ is the finest one among the locally
quasi-convex group topologies on $G$ which are coarser than $\tau$.
%Note that $(G,(\tau)_{\rm{lqc}})$ is Hausdorff if and only if $(G,\tau)$ is $MAP$.
Note  that every precompact group topology is  locally quasi-convex.

If $\tau$ and $\eta$ are group topologies on a group $G$, $\eta$ is said to be ({\em strongly}) {\em compatible}  with $\tau$ if $(G,\eta)^{\ast}=(G,\tau)^{\ast}$ ($(G,\tau)^{\wedge}=(G,\eta)^{\wedge}$, resp.). If $(G,\tau)$ is $MAP$, then $\tau^+$ and $(\tau)_{\rm{lqc}}$ are compatible with $\tau$.
Moreover, if $\tau^+$-compact sets are $\tau$-compact, then $\tau^+$ is strongly compatible with  $\tau$.

Let $\mathcal G$ be a class of  topological abelian groups (for example MAP or $\rm{LQC}$). Following \cite{DEV}, a topological group $(G,\mu)$  is called a  {\em Mackey group}  in $\mathcal G$  (or a $\mathcal G$-{\it Mackey group}) if $(G,\mu)\in \mathcal G$ and if $\nu$ is a compatible group topology with  $\mu$ such that  $(G,\nu)\in \mathcal G$, then $\nu\le \mu$.

For an abelian topological  group $X$ and a sequence $\uuu = ( u_n)$ in its dual group one puts:
\begin{equation} \label{11}
s_\uuu (X):= \{x\in X: u_n(x) \to 0 \mbox{ in }\T \}.
\end{equation}
The subgroups of the form (\ref{11}) will play a crucial role in our considerations.

\subsection{Main results}

The article is devoted to investigation of the following notions  introduced in \cite{BDMW1, DMT}:
\begin{definition} \label{d01}
Let  $\mathbf{u}=(u_n)$ be a sequence in an abelian group $G$.
\begin{itemize}
  \item[{\rm (a)}] {\rm \cite{BDMW1}} If there exists a precompact group topology making  $\uuu=(u_n)$ converge to $0$, we call $\uuu$ a {\em $TB$-sequence}.
For a $TB$-sequence $\uuu$ there exists the {\em finest}  precompact group topology $\Tub$ making  $\uuu=(u_n)$ converge to $0$.
\item[{\rm (b)}] {\rm \cite{DMT}} For a precompact group topology $\tau$ on $G$ we say that the sequence $\uuu= (u_n)$ in $G$ {\em characterizes} $\tau$ (and  $\uuu$ is {\em a characterizing sequence} for $(G,\tau))$, if $\uuu$ is a $TB$-sequence and $\tau = \Tub$.
\item[{\rm (c)}]  A precompact group $(G,\tau)$, as well as its topology $\tau$, is called {\em single sequence characterized} (for short,
{\em $ss$-characterized}) if there exists a sequence of elements of $G$ which characterizes $\tau$.
\end{itemize}
\end{definition}

Hence, Definition \ref{d01} defines  a correspondence
\begin{equation} \label{01}
\uuu \mapsto \Tub
\end{equation}
between $TB$-sequences and precompact topologies. The necessity to understand better the nature of the
correspondence (\ref{01}) motivates   the following general question:

\begin{problem} \label{p00}
Describe all $ss$-characterized precompact abelian groups.
\end{problem}

Let us note that a {\it set-theoretic} description of the topology of an $ss$-characterized group was obtained in \cite{DMT} (see the more general Fact \ref{f1}):

\begin{fact} {\rm \cite{DMT}}\label{f0}
For every $TB$-sequence $\uuu$ in an abelian group $G$ one has  $(G, \Tub)^\ast = s_\uuu \left((G_d)^\wedge\right)$.
\end{fact}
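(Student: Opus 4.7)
The plan is to exploit the Comfort--Ross-type correspondence between dense subgroups $H$ of the compact group $(G_d)^\wedge$ and precompact Hausdorff group topologies $T_H = \sigma(G,H)$ on $G$, recorded in Subsection~\ref{NoTe}: under this correspondence $(G,T_H)^\ast = H$, and finer topologies correspond to larger subgroups. Proving the statement therefore amounts to showing that the subgroup of $(G_d)^\wedge$ realizing $\Tub$ is exactly $H_0 := s_\uuu((G_d)^\wedge) = \{\chi \in (G_d)^\wedge : \chi(u_n) \to 0\}$.

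The inclusion $(G,\Tub)^\ast \subseteq H_0$ is immediate from the definitions: $\Tub$ is by construction a precompact group topology under which $u_n \to 0$, so every continuous character $\chi$ of $(G,\Tub)$ sends $\uuu$ to a null sequence in $\T$, whence $\chi \in H_0$. For the reverse inclusion, my plan is to show that $T_{H_0}$ itself belongs to the family of precompact group topologies making $\uuu$ null, so that the maximality of $\Tub$ yields $T_{H_0} \le \Tub$ and therefore $H_0 = (G,T_{H_0})^\ast \subseteq (G,\Tub)^\ast$. First I would check that $H_0$ is a subgroup of $(G_d)^\wedge$ (closure under addition follows from $\T$-valued convergence to zero being preserved by pointwise addition), and that it is dense in $(G_d)^\wedge$ (this comes for free since $H_0$ contains the already dense dual of $\Tub$, which exists because $\uuu$ is a $TB$-sequence); these two facts legitimize $T_{H_0}$ as a precompact Hausdorff group topology on $G$. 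Finally, a subbasic $T_{H_0}$-neighborhood of $0$ has the form $\chi^{-1}(V)$ for some $\chi \in H_0$ and some neighborhood $V$ of $0$ in $\T$, and by the very definition of $H_0$ we have $\chi(u_n) \in V$ eventually, so $u_n \to 0$ in $T_{H_0}$ as required.

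There is essentially no obstacle here beyond the setup: once the dictionary $H \leftrightarrow T_H$ and the existence of $\Tub$ from Definition~\ref{d01}(a) are invoked, the argument reduces to a short matching of duals around the maximality property defining $\Tub$. The only point that might look slightly non-trivial is the density of $H_0$, but this is handed to us by the $TB$ hypothesis, since any witnessing precompact topology already provides a dense subgroup of $(G_d)^\wedge$ sitting inside $H_0$.
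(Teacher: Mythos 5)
Your proof is correct and complete. The paper itself offers no proof of Fact~\ref{f0} (it is quoted from \cite{DMT}), so there is nothing internal to compare against; but your argument --- the easy inclusion $(G,\Tub)^\ast \subseteq s_\uuu\left((G_d)^\wedge\right)$ from continuity, and the reverse inclusion by checking that $H_0 = s_\uuu\left((G_d)^\wedge\right)$ is a dense subgroup of $(G_d)^\wedge$ (density coming from the $TB$ hypothesis), that $\uuu$ is null in $T_{H_0}$, and then invoking the maximality of $\Tub$ together with the Comfort--Ross dictionary of Fact~\ref{f31}(a) --- is exactly the standard route and uses only facts the paper records in \S 1.1.
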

Nevertheless, till now nothing was known about the {\it topological} properties of the groups in the codomain of (\ref{01}). That is, we did not know even answers to the following questions. Which metrizable precompact groups are $ss$-characterized?  Which other natural classes of precompact group topologies are $ss$-characterized? Which totally disconnected precompact or pseudocompact group topologies are in the codomain  in (\ref{01})? What we can say about topological properties of $ss$-characterized precompact groups as being a $k$-space? The main goal of  the article is to give, among others, complete answers to these questions (see Theorem A and Corollaries C1-C3). Also we give some applications of the obtained results (see Corollary A and Theorem \ref{t42}). Our solution is essentially supported by an appropriate reduction of Problem \ref{p00} to the case of countable groups (see Theorem C).

The next theorem shows that the codomain in (\ref{01}) covers all metrizable precompact topologies on  countably infinite abelian groups:

\medskip

\noindent {\bf Theorem A.}
{\em A metrizable precompact group $G$ is $ss$-characterized if and only if $G$ is countable. Moreover, every characterizing sequence of $G$ generates a finite index subgroup of $G$.}
\medskip

For instance,  the sequence $\mathbf{p} = (p^n)$, where $p$ is prime, is a $TB$-sequence generating  the $p$-adic topology of $\Z$ (see Example \ref{Eggl}(b) below).

As a corollary of Theorem A we show that precompact  metrizable countable groups can not be MAP-Mackey.

\medskip
\noindent {\bf Corollary A.}
{\em Let $(G,\tau)$ be a countably infinite precompact metrizable group. Then on  $G$ there exists a group topology $\eta$ which is strictly finer than $\tau$ and it is strongly compatible with $\tau$. In particular, $(G,\tau)$ is not a $MAP$-Mackey group.}
\medskip

Let us note that Corollary A is the best possible in the following sense: there are countably infinite precompact
metrizable groups {\it which are} LQC-Mackey \cite{BTV} (see also \cite{DDMT}). However, there are
also countably infinite precompact metrizable groups {\it which are not } LQC-Mackey \cite{LD}; see also \cite{DEV}, where a wide class
of (uncountable) precompact metrizable non-LQC-Mackey groups is described. Note also that $\T$ is a LQC-Mackey group, however it
remains unknown whether $\T$ is a MAP-Mackey group as well.

Let $\uuu$ be a $TB$-sequence in an abelian group $G$. Clearly, the {\it countable} subgroup $\langle\uuu\rangle$ of $G$ generated by $\uuu$ must play a crucial role to a solution of Problem \ref{p00}. The first natural step in the understanding of $ss$-characterized topologies is to reduce the general problem to the case of countable groups:
\begin{problem} \label{p01}
Describe all countable $ss$-characterized precompact abelian groups.
\end{problem}
This problem is especially  important in the light of Theorem A. Moreover, a study of the dual group of a countable $ss$-characterized group is interesting from the duality theory point of view
(see Problem section in \cite{Ga3}).

It turns out that the reduction of Problem \ref{p00} to Problem \ref{p01}, that is one of the main goals of the article (see Theorem C), is non-trivial and needs two new notions which are of independent interest.

\begin{definition}
A subgroup $H$ of a topological abelian  group $G$ is said to be
{\em $B$-embedded} if every (algebraic) character $\chi:G/H \to \T$ is continuous in the quotient topology of $G/H$.
\end{definition}

The choice of the term $B$-embedded is explained by the fact that the Bohr topology of  $G/H$  coincides with the Bohr topology of
$(G/H)_d $. Obviously, $B$-embedded subgroups are closed (see below Proposition 3.5).
It should be noted that this notion is strongly related to the existing notion of an $h$-embedded subgroup ($H$ is an $h$-embedded subgroup of a topological abelian  group $G$ if every homomorphism $\chi: H \to \T$ extends to a continuous character of $G$) \cite{T}.

Let $H$ be a subgroup of an abelian group $G$ and $\tau$ be a Hausdorff group topology on $H$. Then the set ${\mathcal N}(H,\tau)$  is a local base of a Hausdorff group  topology $\bar \tau$ on $G$.  Clearly,  $\bar \tau$ is the {\it finest} group topology on $G$ in the class of all Hausdorff group topologies on $G$ extending $\tau$. (The fact that $G$ is abelian is important, in the non-abelian case this construction does not work even if $H$ is a subgroup of index 2 of $G$ \cite{DS0}.) In particular, every subgroup $N$ of $G$ containing $H$ is open (so clopen) in $\bar \tau$.

In the above situation, if the subgroup $H$ of $G$ carries a precompact  topology $\tau$, it is natural to ask whether
there is any precompact topology on $G$ that extends $\tau$. Moreover, motivated by the case of Hausdorff group topologies, it is
natural to ask whether a finest precompact  topology extending $\tau$ exists in this case.

\begin{definition}
Let  $H$  be a subgroup  of an abelian group $G$  and let  $\tau$ be a precompact topology on $H$. A precompact topology  $\tau^\ast$ on $G$ is called a {\rm  finest
precompact extension} of $\tau$  on $G$ if  $\tau^\ast|_H = \tau$ and $\tau' \leq \tau^\ast$ for every precompact topology $\tau'$ on $G$ such that $\tau'|_H =\tau$.
\end{definition}

Obviously, the finest precompact extension is  unique, whenever it exists. The existence of {\em some} precompact extension was established in \cite[Theorem 10.1]{DS}. Moreover, a careful analysis of the proof of \cite[Theorem 10.1]{DS} shows that the specific extension constructed in \cite[Theorem 10.1]{DS} is actually  the {\it finest} precompact extension of $\tau$.
The next theorem provides an explicit description as well as other descriptions of this finest precompact extension $\tau^\ast$.
Its proof, given in \S3, contains also the crucial steps from the proof of  \cite[Theorem 10.1]{DS}, for reader's convenience.
\\

\noindent {\bf Theorem B.} {\em Let $G$ be an  abelian group and $H$
be an arbitrary  subgroup of $G$. Then
\begin{itemize}
\item[(a)] If $\zeta$ is a precompact  topology on $H$, then $(\bar{\zeta})^+$ is the finest precompact extension of $\zeta$.
\par
\item[(b)] For a precompact  topology $\tau$ on $G$ and $\zeta= \tau|_H$, the following assertions are equivalent:
 \begin{itemize}
   \item[{\rm (b$_1$)}] $\tau$ is the finest precompact extension of  $\zeta$.
   \item[{\rm (b$_2$)}] $H$ is $B$-embedded in $(G,\tau)$.
   \item[{\rm (b$_3$)}] Let $j: H_d \to G_d$ be the identity map. Then $ (G,\tau)^\wedge= (j^\wedge)^{-1}\left( (H,\zeta)^\wedge\right)$.
   \item[{\rm (b$_4$)}] $\tau  = \left(\overline{{\zeta}}\right)^+$.
\end{itemize}
\end{itemize}}

Let $\uuu$ be a $TB$-sequence in an abelian group $G$ and $H$ be an arbitrary subgroup containing $\langle\uuu\rangle$. Then $\uuu$ is a $TB$-sequence in $H$. We denote by $\Tub (H)$ the finest precompact group topology on $H$ in which $\uuu$ converges to zero. The next proposition is used in the proof of Theorem C.
\begin{pro} \label{p44}
Let $\uuu$ be a $TB$-sequence in an abelian group $G$ and $H$ be an arbitrary subgroup of $G$ containing $\langle\uuu\rangle$. Then $\Tub$ is the maximal precompact extension of $\tau_\mathbf{u}(H)$. In particular,  $\Tub|_{H} =\Tub (H)$.
\end{pro}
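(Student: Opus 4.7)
The plan is to leverage Theorem B, which supplies the explicit description $\sigma := (\overline{\Tub(H)})^+$ of the finest precompact extension of $\Tub(H)$ from $H$ to $G$. The task then reduces to establishing the equality $\Tub = \sigma$, from which both conclusions of the proposition follow immediately.

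First I would verify that $\sigma$ is a precompact group topology on $G$ in which $\uuu$ converges to $0$. By Theorem B(a), $\sigma$ is precompact and $\sigma|_H = \Tub(H)$. Since all terms of $\uuu$ lie in $H$ and $\uuu \to 0$ in $\Tub(H)$ by the very definition of $\Tub(H)$, we obtain $\uuu \to 0$ in $\sigma$ as well. The maximality property characterizing $\Tub$ then yields $\sigma \leq \Tub$.

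Next I would pin down the restriction $\Tub|_H$: it is a precompact topology on $H$ in which $\uuu \to 0$, so $\Tub|_H \leq \Tub(H)$ by the maximality of $\Tub(H)$. Restricting the inequality $\sigma \leq \Tub$ to $H$ gives the reverse inequality $\Tub(H) = \sigma|_H \leq \Tub|_H$. Hence $\Tub|_H = \Tub(H)$, which is the \emph{in particular} assertion. Having established this, $\Tub$ itself is a precompact topology on $G$ extending $\Tub(H)$, so Theorem B(a) (in its maximality aspect) gives $\Tub \leq \sigma$. Combining both inequalities yields $\Tub = \sigma = (\overline{\Tub(H)})^+$, so $\Tub$ is the finest precompact extension of $\Tub(H)$, as required.

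The argument is essentially immediate once Theorem B is available; the only point requiring care is to juggle the two distinct maximality principles at play, namely the maximality of $\Tub$ among precompact topologies on $G$ making $\uuu \to 0$, and the maximality of $\sigma$ among precompact topologies on $G$ whose restriction to $H$ equals $\Tub(H)$. One also uses implicitly that the restriction of a precompact topology to a subgroup is again precompact, which is a standard consequence of total boundedness being inherited by subspaces.
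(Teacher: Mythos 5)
Your proposal is correct and follows essentially the same argument as the paper: both invoke Theorem B to obtain the finest precompact extension of $\Tub(H)$, observe that $\uuu\to 0$ in it so it is $\leq \Tub$, restrict to $H$ to get $\Tub|_H=\Tub(H)$, and then use maximality of the extension to conclude equality. The only cosmetic difference is that you name the extension explicitly as $(\overline{\Tub(H)})^+$ via Theorem B(a), whereas the paper works with it abstractly as $\tau^\ast$.
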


Since the maximal precompact extension is unique,  Proposition \ref{p44} implies:

\begin{cor} \label{c45}
Let $\uuu$ and $\vvv$ be  $TB$-sequences in an abelian group $G$ and $H$ be an arbitrary subgroup of $G$
containing $\uuu$ and $\vvv$. Then $\Tub =\Tvb$ if and only if $\Tub(H) =\Tvb(H)$.
\end{cor}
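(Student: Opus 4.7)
The plan is to derive this as a direct consequence of Proposition \ref{p44}, using the uniqueness of the finest precompact extension. The statement is essentially a biconditional rewriting: restriction to $H$ turns $\Tub$ into $\Tub(H)$ (and similarly for $\vvv$), while conversely $\Tub$ is uniquely recoverable from $\Tub(H)$ as its finest precompact extension.

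For the forward direction I would argue as follows. Assume $\Tub = \Tvb$. Then their restrictions to $H$ agree, i.e., $\Tub|_H = \Tvb|_H$. By Proposition \ref{p44}, $\Tub|_H = \Tub(H)$ and $\Tvb|_H = \Tvb(H)$, so $\Tub(H) = \Tvb(H)$. This direction uses only the ``in particular'' part of Proposition \ref{p44}.

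For the reverse direction, assume $\Tub(H) = \Tvb(H)$; call this common precompact topology $\zeta$ on $H$. Proposition \ref{p44} asserts that $\Tub$ is the finest (maximal) precompact extension of $\Tub(H) = \zeta$ to $G$, and likewise $\Tvb$ is the finest precompact extension of $\Tvb(H) = \zeta$ to $G$. Since the finest precompact extension of a given precompact group topology on $H$ to $G$ is unique whenever it exists (as noted immediately after the definition of ``finest precompact extension,'' and also built into Theorem B\,(a) via the explicit description $(\bar\zeta)^+$), we conclude $\Tub = \Tvb$.

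The only potential subtlety is checking that the hypothesis in Proposition \ref{p44} is met for both sequences: one needs $H$ to contain $\langle \uuu \rangle$ and $\langle \vvv \rangle$. This is given, since $H$ is assumed to contain $\uuu$ and $\vvv$ (and hence the subgroups they generate). Beyond this verification, the argument is essentially a one-line appeal to uniqueness, so I do not expect any real obstacle.
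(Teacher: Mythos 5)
Your proof is correct and is essentially the paper's own argument: the paper derives Corollary \ref{c45} in one line from Proposition \ref{p44} together with the uniqueness of the finest (maximal) precompact extension, exactly as you do. Your version simply spells out the two directions (restriction for one, uniqueness of the extension for the other) in more detail.
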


The following theorem describes all $ss$-characterized precompact abelian groups by modulo {\it countable} $ss$-characterized groups and is the reduction principle to the countable case:\\

\noindent {\bf Theorem C.} {\em For a precompact  abelian group $(G, \tau)$ the following are equivalent:

\begin{enumerate}
  \item[{\rm (i)}] $(G, \tau)$ is $ss$-characterized.
  \item[{\rm (ii)}] $(G, \tau)$ has a countable $B$-embedded $ss$-characterized subgroup $H$.
\end{enumerate}
}

Compact characterized groups are finite, as the following more precise corollary shows.
Let us recall first that a topological group $G$ is said to be {\em pseudocompact} if every real-valued continuous function on $G$ is bounded (so that every compact group is pseudocompact).

\medskip

\noindent {\bf Corollary C1.} {\em A pseudocompact Hausdorff  group $G$ is $ss$-characterized if and only if $G$ is finite.}

\medskip

Other two compact-like properties missed by the uncountable $ss$-characterized precompact groups are provided by the next two corollaries:

\medskip

\noindent {\bf Corollary C2.} {\em
 Let $(G,\tau)$ be $ss$-characterized precompact  group  which  is  a $k$-space too. Then $G$ is countable and sequential.
}
\medskip

According to \cite{DS1}, a topological group $G$ is an {\em Arhangel'ski\u \i \ group}, if the weight $w(G)$ of $G$ does not exceed of the cardinality  $|G|$ of $G$, i.e., $w(G)\leq |G|$. Compact groups are Arhangel'ski\u \i \ groups, more generally, locally minimal groups are Arhangel'ski\u \i \ groups \cite{ACDD} (it is known that locally compact groups, as well as minimal groups are locally minimal).

\medskip

\noindent {\bf Corollary C3.} {\em A precompact Arhangel'ski\u \i \ group is $ss$-characterized if and only if $G$ is metrizable (and countable).}

\medskip

The article is organized as follows. In Section 1 we give all necessary definitions and examples explaining  our main notions.
In Section 2 we prove the sufficiency in Theorem A and we deduce from it Corollary A.
In \S 3.1 we prove Theorem B. Theorems A and C, and Corollaries C1, C2 and C3 are proved in \S  3.2.

\subsection{Background on $T$- and $TB$-sequences}
\label{HEN}

The question of when a given sequence in an abelian group may converge to $0$ in some Hausdroff topology is of independent interest.

\begin{definition} {\rm \cite{PZart}} \label{d02}
Let $G$ be an abelian group and let  $\mathbf{u}=(u_n)$ be a sequence in $G$.
If  there exists a Hausdroff group topology making  $\uuu=(u_n)$ converge to $0$, then we call $\uuu$ a {\em $T$-sequence}.
For a $T$-sequence $\uuu$ there exists the {\em finest} Hausdroff group topology $\Tu$ that makes  $\uuu=(u_n)$ converge to $0$.
\end{definition}
This topology was first considered by Graev \cite{G}, and later by Protasov and Zelenyuk \cite{PZart,PZ}. $T$-sequences were thoroughly studied in \cite{PZart}.
Clearly, every $TB$-sequence is also a $T$-sequence, but the converse in general is not true. While it is quite hard to check whether a given sequence is a $T$-sequence,
the case of $TB$-sequences is much easier to deal with due to a very simple criterion (see Fact \ref{Crit}).

\begin{rmk} \label{zprot}
{\em Let us note the following properties of topologies of the form $\Tu$ discovered in \cite{PZart,PZ}. Let $\uuu =(u_n)$ be an arbitrary nontrivial $T$-sequence
in an abelian group $G$ (i.e., $u_n \not= 0$ for infinitely many indices). Then  $(G,\Tu)$ is a complete sequential (and hence a $k$-space) but not a Fr\'{e}chet-Urysohn group. In particular, $(G,\Tu)$ is not metrizable. } %Since compact sequential groups are metrizable and the topology $\Tu$ is complete, $(G,\Tu)$ is never precompact.
 \end{rmk}

%%%%%%%%%%%%%%%%%%%%%%%%%
%%%%%%%%%%%%%%%%%%%%%%%%%
The next fact will be essentially used in the proof of Corollary A:
\begin{fact}\label{fPre}{\rm \cite[2.3.12]{PZ}}
Let $\uuu$ be a $T$-sequence in an abelian group $G$. Then $(G,\Tu)$ is not precompact. In particular, if $\uuu$ is a $TB$-sequence in $G$, then $\Tub\not= \Tu$.
\end{fact}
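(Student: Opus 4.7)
The plan is to argue by contradiction, exploiting the strong structural properties of $(G,\Tu)$ already recorded in Remark \ref{zprot}. If $\uuu$ is eventually zero and $G$ is infinite, then $\Tu$ is just the discrete topology (this is the finest Hausdorff group topology at all), which is not precompact on an infinite group, so that case is settled. Hence the interesting case is when $\uuu$ is nontrivial, where Remark \ref{zprot} hands me exactly the input I want: $(G,\Tu)$ is a complete Hausdorff sequential group that is \emph{not} metrizable.

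Now I suppose for a contradiction that $(G,\Tu)$ is precompact. Since it is also complete and Hausdorff, it is automatically compact (a precompact Hausdorff group is compact iff it is Raikov complete). At this point the plan is to invoke two classical external theorems from the topology of compact groups. First, Kuzminov's theorem asserts that every compact Hausdorff topological group is a dyadic space, i.e.\ a continuous image of some Cantor cube $\{0,1\}^\kappa$. Second, the Efimov--Gerlits theorem asserts that a dyadic compact Hausdorff space is metrizable if and only if it is sequential. Combining these, the compact sequential group $(G,\Tu)$ must be metrizable, which flatly contradicts the last assertion of Remark \ref{zprot}.

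The ``in particular'' clause then follows immediately: if $\uuu$ is a $TB$-sequence, then by Definition \ref{d01}(a) the topology $\Tub$ is precompact on $G$, whereas the preceding paragraph shows that $\Tu$ is not precompact; hence $\Tub \neq \Tu$. The main obstacle in this plan is philosophical rather than technical: the argument relies entirely on two nontrivial external results (Kuzminov plus Efimov--Gerlits). A self-contained alternative would have to work directly from the Protasov--Zelenyuk explicit description of the neighborhood filter at $0$ in $\Tu$ (via symmetric finite sums drawn from tails of $\uuu$) and show that no such basic neighborhood can be contained in any set of the form $\{x\in G: |\chi_i(x)|<\varepsilon,\ i=1,\dots,k\}$ with $\chi_i\in(G,\Tu)^\ast$ and $\varepsilon>0$, thus ruling out local quasi-convexity and, a fortiori, precompactness. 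That route is significantly more intricate than the two-line citation plan above.
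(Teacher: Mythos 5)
Your argument is essentially sound, but it is worth stressing that the paper offers \emph{no} proof of this statement at all: Fact \ref{fPre} is imported verbatim from \cite[2.3.12]{PZ}, where it is established by a direct combinatorial argument from the explicit neighbourhood base of $\Tu$ (one exhibits a basic neighbourhood $U$ built from symmetric finite sums over tails of $\uuu$ and shows that no finite family of translates of $U$ covers $G$) --- which is exactly the ``more intricate'' route you gesture at in your last paragraph. Your route is genuinely different: you bootstrap from the \emph{other} Protasov--Zelenyuk facts recorded in Remark \ref{zprot} (completeness, sequentiality, non-metrizability of $(G,\Tu)$ for nontrivial $\uuu$), upgrade precompact $+$ complete to compact, and then invoke Kuzminov's dyadicity theorem together with the Efimov-type result that a sequential (indeed, countably tight) dyadic compactum is metrizable, contradicting non-metrizability. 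The logic is correct and non-circular (the completeness and sequentiality results in \cite{PZ} do not rely on 2.3.12), so this is a legitimate alternative derivation; what it buys is brevity, at the price of two deep external theorems and of resting entirely on Remark \ref{zprot}, which is itself only a citation --- so the proof is no more self-contained than the paper's own treatment.

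Two small caveats. First, as literally stated the Fact is false for finite $G$ (there $\Tu$ is discrete, hence compact); your ``trivial case'' quietly assumes $G$ infinite, which is indeed the paper's implicit standing hypothesis (note that a nontrivial $T$-sequence automatically forces $G$ to be infinite, since in a Hausdorff group a null sequence with infinitely many terms equal to some $g\neq 0$ is impossible), but this should be said explicitly. Second, the attribution ``dyadic $+$ sequential $\Rightarrow$ metrizable'' is most safely quoted in the form ``a dyadic compactum of countable tightness is metrizable'' (Efimov), with sequentiality implying countable tightness; stating it as an ``iff'' is true but not what you need.
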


%%%%%%%%%%%%%%%%%%%%%%%%%
%%%%%%%%%%%%%%%%%%%%%%%%%

The following criterion gives a simple dual condition on a sequence to be a $TB$-sequence:
\begin{fact}\label{Crit}{\rm \cite{BDMW1}}
A sequence $\uuu$ in an abelian group $G$ is a $TB$-sequence if and only if the subgroup  $s_\uuu((G_d)^{\wedge})$ is dense in  the compact group $ (G_d)^{\wedge}$.
\end{fact}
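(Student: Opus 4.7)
The plan is to deploy the Comfort--Ross correspondence between precompact group topologies on $G$ and dense subgroups of the compact group $(G_d)^\wedge$, as recalled in the paragraph preceding Fact \ref{f31} in the excerpt. Under this correspondence, a precompact topology $\tau$ on $G$ has the form $T_H = \sigma(G,H)$ for $H = (G,\tau)^\ast$, a dense subgroup of $(G_d)^\wedge$, and conversely every dense subgroup produces such a topology.

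For the forward implication, I would start with a precompact group topology $\tau$ on $G$ for which $u_n \to 0$, and set $H := (G,\tau)^\ast$. Then $H$ is dense in $(G_d)^\wedge$. The verification that $H \subseteq s_\uuu((G_d)^\wedge)$ is immediate: any $\chi \in H$ is $\tau$-continuous, so $\chi(u_n)\to \chi(0)=0$ in $\T$, which is exactly the condition for $\chi$ to belong to $s_\uuu((G_d)^\wedge)$. Density of $H$ then forces density of the larger set $s_\uuu((G_d)^\wedge)$.

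For the reverse implication, suppose $H := s_\uuu((G_d)^\wedge)$ is dense in $(G_d)^\wedge$. Observe that $H$ is automatically a subgroup of $(G_d)^\wedge$, since $\chi(u_n)\to 0$ and $\psi(u_n)\to 0$ in $\T$ imply $(\chi\pm\psi)(u_n)\to 0$. By Comfort--Ross, the topology $\tau := T_H = \sigma(G,H)$ is a precompact Hausdorff group topology on $G$. It remains only to check that $u_n \to 0$ in $\tau$: a basic $\tau$-neighbourhood of $0$ has the form $\{g\in G : \chi_i(g)\in \T_+,\ i=1,\dots,k\}$ for finitely many $\chi_i\in H$, and by the defining property of $H$ each $\chi_i(u_n) \to 0$, so eventually $u_n$ lies in this neighbourhood.

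There is really no obstacle of substance; the argument is a direct combination of the Comfort--Ross dictionary with the definitions of $TB$-sequence and of the subgroup $s_\uuu(\cdot)$ introduced in \eqref{11}. The only point worth being slightly careful about is interpreting the elements $u_n\in G$ as characters on $(G_d)^\wedge$ through the canonical homomorphism $\alpha_G$, so that the expression $s_\uuu((G_d)^\wedge)$ makes sense as a subset of $(G_d)^\wedge$; after that the equivalence is, as noted at the end of the excerpt, a ``very simple criterion''.
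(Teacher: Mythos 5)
Your argument is correct. Note that the paper itself gives no proof of this statement --- it is quoted as a Fact from \cite{BDMW1} --- so there is nothing internal to compare against; what you wrote is the standard argument one would expect, and it matches the Comfort--Ross machinery the paper recalls before Fact \ref{f31} (density of $(G,\tau)^\ast$ in $(G_d)^\wedge$ being equivalent to $\tau$ being Hausdorff precompact, and convergence in $\sigma(G,H)$ being tested against the characters in $H$). One cosmetic point: the sets $\{g : \chi_i(g)\in\T_+\}$ alone do not form a neighbourhood base of $0$ for $\sigma(G,H)$ --- you need $\{g : \chi_i(g)\in W\}$ with $W$ ranging over arbitrarily small neighbourhoods of $0$ in $\T$ --- but since $\chi_i(u_n)\to 0$ for each $i$, the sequence eventually enters any such set, so the conclusion is unaffected.
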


A good supply of examples of $TB$-sequences can be found in the cyclic group $G = \Z$, where the criterion \ref{Crit} works especially well,
since the dense subgroups of $\T = \Z^{\wedge}$ are exactly the infinite subgroups of $\T$.

\begin{exa} \label{Eggl}
{\rm Let $\mathbf{a}=(a_n)$ be a sequence  of positive integers and $q_n= a_{n+1}/a_n$ for every $n$.
\begin{itemize}
\item[(a)] If $q_n \to \infty$, then
\begin{itemize}
\item[(a1)] $|s_\mathbf{a}(\T)|=\cont$. (This  was proved by Egglestone \cite{Egg}; see also \cite{BDMW1}).
\item[(a2)] $\aaa$ is a $TB$-sequence by (a1) and Fact \ref{Crit}.
\item[(a3)] The dual of the (countably infinite) precompact   group $(\Z,\tau_{\mathbf{a}})$ has the cardinality $\cont$; in particular,
$(\Z,\tau_{\mathbf{a}})$ is not metrizable.
\end{itemize}
  \item[(b)] For a prime number $p$  let us consider the sequence $\mathbf{p} = (p^n)$. According to \cite{A},
  the subgroup $s_\mathbf{p} (\T)$ coincides with the $p$-torsion subgroup $\Z(p^\infty)$ of $\T$, so it is infinite. Hence, by Fact \ref{Crit}, the sequence
  $\mathbf{p}$ is a $TB$-sequence (note that the sequence of ratios $(q_n)$ now is bounded, actually, constant) and $\tau_\mathbf{p}$ is the $p$-adic topology of $\Z$.
  Note that $(\Z, \tau_\mathbf{p})$ is metrizable (cf. Remark \ref{zprot}).
\item[(c)] Let  $\mathbf{f}=(f_n)$ be the sequence defined by $f_0= f_1= 1$ and $f_{n+2}=f_{n+1} + f_n$ for all $n\in \N$. This is the  celebrated {\it Fibonacci's sequence}.
 It was proved by Larcher \cite{La}  (see also \cite{BDMW1}) that $s_\mathbf{f}(\T)=\langle \alpha\rangle$, where $\alpha$ is the positive solution of the equation $x^2 -x-1 =0$ (namely, {\it the Golden Ratio}) taken modulo 1 (i.e., as an element of $\T$). In particular, $s_\mathbf{f}(\T)$ is infinite, hence dense (as $\alpha$ is irrational). So, by Fact \ref{Crit}, $\mathbf{f}$ is a $TB$-sequence.
A significant generalization of this fact to recursively defined sequences in $\Z$ of higher degree can be found in \cite{BDMW3}.
\item[(d)] Let us show that for every non-zero polynomial $P(x)\in \mathbb{Z}[x]$, the sequence of integers $\uuu =(P(n))_{n\in\N}$ is not a $T$-sequence in $\Z$.
 The proof goes by induction on the degree $\deg(P)$ of $P(x)$. If $\deg(P)=0$, then $P(x)= A\not=0$ and hence $u_n = A\not= 0$ for every $n\in\N$.
 Clearly, $\uuu$ is not
a $T$-sequence. Assume that  for every polynomial $P(x)\in
\mathbb{Z}[x]$ with $\deg(P)\leq m$ the sequence $\uuu =(P(n))$ is
not a $T$-sequence. Let $P_0(x)\in \mathbb{Z}[x]$ with
$\deg(P_0)\leq m+1$. Assume that  $\tau$ is a Hausdorff group
topology on $\Z$ with $\uuu=(P_0
(n))_{n\in\N}\buildrel{\tau}\over\longrightarrow 0$.
 Then also $v_n := u_{n+1} - u_n = P_0 (n+1) - P_0 (n) \to 0$  in $\tau$. Thus $\vvv$ is a $T$-sequence defined by
 the polynomial $P(x):= P_0 (x+1) - P_0 (x)$ of degree $m$,
 a contradiction. (The case of the sequence $(n^2)_{n\in\N}$ was settled in a different way in \cite{CC}).
\end{itemize} }
\end{exa}

It is worth mentioning that the subgroups of the form (\ref{11}) of the torus $\T$ characterize the so called topologically torsion elements.
Using the sequence $\mathbf{p} = (p^n)$ from Example \ref{Eggl}(b), Braconnier \cite{Bra} and Vilenkin \cite{Vi}
defined {\em topologically $p$-torsion elements} for an arbitrary locally compact abelian groups, namely the elements of the
subgroup $t_\mathbf{p}(G):= \{x\in G: p^nx \to 0\}$ of $G$. Using the sequence $\uuu = (n!)$,
Robertson (see \cite{A}) defined {\em topologically torsion elements}
by $t_\uuu(G):= \{x\in G: n! x \to 0\}.$
As noticed in  \cite{D}, the notion of {\em topologically $\uuu$-torsion element} can be extended
to any sequences $\uuu$ of integers and  an arbitrary topological abelian group $G$ by letting
\begin{equation} \label{ee}
t_\uuu(G):= \{x\in G: u_nx \to 0\}
\end{equation}
(for sequences with $u_n| u_{n+1}$ this can be found already in
\cite[Chapter 4, Notes]{DPS}). The subgroups of $\T$ of this form where studied by Borel \cite{Borel}, who proved that every countable
subgroup of $\T$ has the form  $t_\uuu(\T)$ for an appropriate $\uuu$. Later on,
this theorem was reproved in \cite{BDS} (with a gap in the case of torsion subgroups), where the sequence $\uuu$ ensuring $H =
t_\uuu(G)$ was called a {\em characterizing sequence} for $H$.
\begin{rmk}
{\em For a sequences $\uuu$ of integers and a topological group $G$, a non-torsion element $x\in G$ generates a subgroup $\langle x \rangle$
 algebraically isomorphic
to $\Z$. Since $x\in t_\uuu(G)$ precisely when $x\in t_\uuu( \langle
x \rangle)$, this shows that $\uuu$ {\em is a $T$-sequence in $\Z$ whenever $t_\uuu(G)$ contains non-torsion elements} $x$.
Therefore, by Example \ref{Eggl}(d) $t_\uuu(G)$ contains only torsion elements whenever $u_n = P(n)$, for some non-zero polynomial $P(x)\in \mathbb{Z}[x]$
(this was announced without proof in \cite[Example 2.10(a)]{D}).}
\end{rmk}

The possibility to extend the fact that all countable subgroups of $\T$ admit a characterizing sequence to  countable subgroups of arbitrary compact metrizable groups
was very briefly mentioned in \cite{BDS} without saying explicitly what a characterized subgroup and a characterizing sequence must be in the general case.
The necessity to change the pattern $t_\uuu (X)$ used in the case $X = \T$ was pointed out in \cite{DMT} (see also  \cite{D}).
Actually, it was proved in \cite{DDS} that if all cyclic subgroups of a locally compact group  $G$ are intersections of subgroups of the form
(\ref{ee}), then $G\cong \T$, thereby clarifying the fact that the pattern (\ref{ee}) cannot be used for a reasonable definition of characterized subgroup
(for example, if $X$ is the group
of $p$-adic integers for some prime $p$, then $t_\uuu(X)$ coincides with either $X$ or $\{0\}$ \cite[Example 4.11]{D}).

Motivated by the situation described above, the following notion was proposed in \cite{DMT}, making use the subgroups of the form $s_\uuu (X)$ of a topological abelian group $X$:
\begin{definition}{\rm \cite{DMT}} \label{d03}
Let $H$ be a subgroup of a topological abelian group $G$. We say that $H$ is {\em characterized}, if there exists a sequence $\uuu = ( u_n)_{n\in\N}$ in  $G^{\wedge}$ such that  $H= s_\uuu (G)$. In such a case, we say that $\uuu$ {\em characterizes} $H$.
\end{definition}

The importance of the notion of characterized subgroup is obvious  also from the following fact:

\begin{fact} {\rm \cite{DMT}} \label{f1}\label{f13}
Let $\uuu =(u_n)$ be a  $TB$-sequence in an infinite abelian group $G$ and $H=s_\mathbf{u} ((G_d)^\wedge)$.
Then  for every group topology $\tau$ such that $\Tub\leq \tau\leq \Tu$ one has $(G,\tau)^\ast =H$.
In particular, the topology $\tau$ is compatible with $\Tub$ and the sequence $\uuu = (u_n)$ characterizes  $\Tub$.
\end{fact}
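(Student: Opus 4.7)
The proof splits naturally into two inclusions once the two extremes of the interval are computed. Since $\uuu$ is a $TB$-sequence it is, in particular, a $T$-sequence, so $\Tu$ is defined; and since $\Tub$ is one of the Hausdorff group topologies making $\uuu \to 0$ while $\Tu$ is the finest such, one has $\Tub \leq \Tu$, so the hypothesis $\Tub \leq \tau \leq \Tu$ is consistent and the claim makes sense.

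First I would dispatch the inclusion $H \subseteq (G,\tau)^\ast$. By Fact \ref{f0} applied to the $TB$-sequence $\uuu$, one has $(G,\Tub)^\ast = s_\uuu((G_d)^\wedge) = H$. Since $\Tub \leq \tau$, every $\Tub$-continuous character is $\tau$-continuous, so $H = (G,\Tub)^\ast \subseteq (G,\tau)^\ast$.

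For the reverse inclusion $(G,\tau)^\ast \subseteq H$, pick $\chi \in (G,\tau)^\ast$. Since $\tau \leq \Tu$, the character $\chi$ is also $\Tu$-continuous; combined with $u_n \to 0$ in $\Tu$ this forces $\chi(u_n) \to 0$ in $\T$. Viewing $\chi$ as an element of $(G_d)^\wedge$ (every group homomorphism $G \to \T$ is automatically continuous on $G_d$, so $(G,\tau)^\ast \subseteq (G_d)^\wedge$), the convergence $\chi(u_n) \to 0$ says precisely that $\chi \in s_\uuu((G_d)^\wedge) = H$. Together with the previous step this yields $(G,\tau)^\ast = H$.

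The ``In particular'' clause is then immediate: the equality $(G,\tau)^\ast = H = (G,\Tub)^\ast$ is the definition of $\tau$ being compatible with $\Tub$, and that $\uuu$ characterizes $\Tub$ is a restatement of the very definition of $\Tub$, since $\uuu$ is a $TB$-sequence. There is really no obstacle of substance here; the only point worth flagging is that $\tau$ itself is not required to be precompact or even locally quasi-convex — the argument works purely through functorial monotonicity of $(G,\cdot)^\ast$, once one recognises that the endpoints $\Tub$ and $\Tu$ have the same character group $H$ (and this common value is forced on everything in between).
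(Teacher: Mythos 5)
Your proof is correct: the sandwich argument $H=(G,\Tub)^\ast\subseteq (G,\tau)^\ast\subseteq s_\uuu\left((G_d)^\wedge\right)=H$, using Fact \ref{f0} for the lower endpoint and the continuity of $\tau$-continuous characters along the $\Tu$-null sequence $\uuu$ for the upper one, is exactly the natural route. Note that the paper itself gives no proof of this statement --- it is quoted as a Fact from \cite{DMT} --- so there is nothing in-text to compare against; your argument is self-contained modulo Fact \ref{f0}, which the paper likewise takes from \cite{DMT}, and the ``in particular'' clause is handled correctly.
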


Let us point out that in the special case $\T^\wedge = \Z$, one has $s_\uuu (\T) = \{x\in \T: u_n x \to 0\mbox{ in }\T\}=t_\uuu(\T)$, for every sequence $\uuu$ of integers.

The following theorem, which was proved by Kunen and the first named author \cite{DK} and by Beiglb\" ock,  Steineder, and  Winkler \cite{BSW} independently and almost simultaneously, will play a crucial role in our considerations:
\begin{theorem}\label{ThmDK} {\rm \cite{DK}}
Let $H$ be a countable subgroup of a compact metrizable abelian group $G$. Then $H$ is a characterized subgroup.
\end{theorem}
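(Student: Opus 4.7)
The task is to produce a sequence $\uuu=(u_n)\subseteq G^{\wedge}$ with $s_\uuu(G)=H$. The essential hypothesis is that $G$ is compact \emph{metrizable}, which, by Pontryagin duality, is equivalent to $G^{\wedge}$ being countable and discrete. Thus we have at our disposal countably many ``building blocks'' to construct the characterizing sequence, and the task splits into achieving two opposite goals: (A) $u_n(h)\to 0$ in $\T$ for every $h\in H$, so that $H\subseteq s_\uuu(G)$; and (B) $u_n(g)\not\to 0$ for every $g\in G\setminus H$, so that $s_\uuu(G)\subseteq H$.

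My plan is to build $(u_n)$ by a recursion on $\N$ after fixing enumerations $H=\{h_0,h_1,\ldots\}$, $G^{\wedge}=\{\chi_0,\chi_1,\ldots\}$, a base $\{V_k\}$ of neighborhoods of $0$ in $\T$, and a bookkeeping function $n\mapsto (k(n),j(n))$ enumerating $\N\times\N$ surjectively with each pair appearing infinitely often. At stage $n$ with bookkeeping pair $(k,j)$, I choose $u_n\in G^{\wedge}$ as a carefully controlled ``perturbation'' of $\chi_j$: concretely, pick $u_n=\chi_j+\psi_n$ where $\psi_n\in G^\wedge$ is chosen so that $u_n(h_i)\in V_n$ for all $i\le k$. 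To see that such $\psi_n$ exists, note that the finite subgroup $F_k=\langle h_0,\ldots,h_k\rangle$ of $G$ has a dual quotient $G^\wedge\twoheadrightarrow F_k^\wedge$ whose kernel is the annihilator of $F_k$ in $G^\wedge$; the annihilator is a subgroup of infinite index at most $|F_k|$, and we only need $\psi_n$ to take prescribed (small) values on the finite set $F_k$, which leaves infinitely many choices of $\psi_n$.

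The convergence goal (A) then follows automatically: for each fixed $i$, from some stage on one has $k(n)\ge i$ and hence $u_n(h_i)\in V_n$, forcing $u_n(h_i)\to 0$. For (B), let $g\in G$ with $u_n(g)\to 0$; I must show $g\in H$. By Peter-Weyl, characters separate points of $G$, so if $g\notin H$ there should exist a ``witness'' character in $G^\wedge$ distinguishing $g$ from $H$. The bookkeeping ensures that every $\chi_j$ is used infinitely often, and along the subsequence where $(k(n),j(n))=(k,j)$ the equality $u_n(g)=\chi_j(g)+\psi_n(g)$ holds, where $\psi_n(g)$ is controlled only when $g\in F_{k}$. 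Consequently, if $u_n(g)\to 0$ on all such subsequences, one obtains strong rigidity on $g$: it must belong to the closure (in the Bohr topology of $G_d$) of $H$ intersected with $G$, and a careful identification of this set with $H$ itself completes the argument. The main technical obstacle lies precisely here: one needs the perturbations $\psi_n$ to be genuinely ``small'' off $F_{k(n)}$ while still being a character, so that the asymptotic behavior of $u_n(g)$ truly records whether $g\in H$. This is handled in \cite{DK,BSW} via a delicate bookkeeping that exploits the countability of $G^\wedge$ together with the fact that $s_\uuu(G)$ is automatically an $F_{\sigma\delta}$ subgroup of the Polish group $G$; once one knows that $s_\uuu(G)$ has this descriptive complexity and contains the dense-in-itself relevant portion of $H$, a Baire-category-style argument forces $s_\uuu(G)=H$.

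In short, the outline is: (i) reduce to a recursive construction via countability of $G^\wedge$; (ii) perturb an enumeration of $G^\wedge$ by correctors $\psi_n$ vanishing on increasing initial segments of $H$ to secure (A); (iii) use the infinitely-often appearance of each $\chi_j$ together with the $F_{\sigma\delta}$ structure of $s_\uuu(G)$ to secure (B). The nontrivial point, and the true content of the theorem, is the final verification of (B), for which the metrizability of $G$ (countability of $G^\wedge$) is indispensable.
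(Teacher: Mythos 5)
The paper does not prove Theorem~\ref{ThmDK} at all: it is imported from \cite{DK} (and \cite{BSW}) as a known result, so there is no internal proof to compare with and your sketch has to stand on its own. It does not. The first problem is that the decomposition $u_n=\chi_{j}+\psi_n$ is vacuous as a constraint: since $\psi_n$ is restricted only by conditions on the finite set $\{h_0,\dots,h_{k}\}$, choosing ``$\chi_j$ perturbed by $\psi_n$'' is exactly the same as choosing an arbitrary $u_n$ in the Bohr neighbourhood $W_n=\{u\in G^{\wedge}: u(h_i)\in V_n,\ i\le k(n)\}$, and the value $\psi_n(g)$ for $g\notin\langle h_0,\dots,h_k\rangle$ is completely uncontrolled, so the ``witness'' $\chi_j$ contributes nothing towards excluding $g\notin H$ from $s_\uuu(G)$. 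You acknowledge that one would need $\psi_n$ to be ``genuinely small off $F_{k(n)}$,'' but no such character exists in any naive sense (a nonzero character of $\T$, say, is surjective, hence never uniformly small off a proper subgroup), and supplying a workable substitute is precisely the content of the theorem. The auxiliary justification is also incorrect: $F_k=\langle h_0,\dots,h_k\rangle$ is finitely generated but need not be finite, and its annihilator in $G^{\wedge}$ is isomorphic to $(G/\overline{F_k})^{\wedge}$, which can be trivial (e.g.\ $G=\T$ and some $h_i$ irrational), so the claim that it has ``infinite index at most $|F_k|$'' does not parse. Finally, even goal (A) fails with your bookkeeping: if every pair $(k,j)$ occurs infinitely often then $k(n)$ does not tend to infinity, so for each fixed $i$ there are infinitely many $n$ with $k(n)<i$ at which $u_n(h_i)$ is unconstrained, and $u_n(h_i)\to 0$ is not guaranteed.

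The deeper gap is (B), which you essentially concede by deferring it to a ``Baire-category-style argument'' resting on $s_\uuu(G)$ being $F_{\sigma\delta}$. Descriptive complexity gives no upper bound: an $F_{\sigma\delta}$ subgroup containing the countable group $H$ can perfectly well be a proper uncountable subgroup of $G$, or all of $G$ (take $u_n=0\in W_n$ for every $n$ --- your construction permits this --- and $s_\uuu(G)=G$); compare Example~\ref{Eggl}(a), where $s_\aaa(\T)$ has cardinality $\cont$. There is no category argument converting ``Borel and contains $H$'' into ``equals $H$.'' The actual proofs in \cite{DK} and \cite{BSW} consist of a quantitative recursive construction in which the $u_n$ are chosen with explicit growth and separation conditions so that every $g\notin H$ is witnessed by infinitely many $n$ with $u_n(g)$ bounded away from $0$; none of that mechanism is present here. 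As written, the proposal establishes neither inclusion between $H$ and $s_\uuu(G)$.
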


It follows easily from Definition \ref{d03} that every characterized subgroup $H$ is a countable intersection of $F_\sigma$-sets and
hence it is a Borel set. In particular,  characterized subgroups of  compact metrizable groups  can either be countable or has size $\cont$.

%%%%%%%%%%%%%%%%%%%%%%%%%%%%%%%%%%%%%%%%%%%%%%%%%%%%%%%%

\section{Characterizing sequences of the metrizable countable precompact groups}\label{Char2}

We start from the proof of the sufficiency in Theorem A:

 \begin{theorem}\label{MainTheorem}
 Let $G$ be a countable abelian group. Then every metrizable precompact group topology on $G$  has a characterizing sequence.
\end{theorem}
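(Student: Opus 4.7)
The plan is to reduce the statement to Theorem \ref{ThmDK} via the duality correspondence \cite{CR} between precompact group topologies on $G$ and dense subgroups of the compact metrizable group $(G_d)^\wedge$.

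First, I would set $\Gamma := (G,\tau)^\ast$, viewed as a subgroup of $(G_d)^\wedge$, so that $\tau = T_\Gamma = \sigma(G,\Gamma)$. Since $\tau$ is a Hausdorff precompact topology, $(G,\tau)$ is MAP and hence $\Gamma$ separates the points of $G_d$, i.e., $\Gamma$ is dense in $(G_d)^\wedge$. Next, since $\tau$ is metrizable and precompact, the completion $\widetilde{(G,\tau)}$ is a compact metrizable abelian group, and $\Gamma = \widetilde{(G,\tau)}^{\,\wedge}$ is therefore countable and discrete. In summary, $\Gamma$ is a countable dense subgroup of the compact metrizable group $K := (G_d)^\wedge$ (metrizability of $K$ follows from countability of $G_d$).

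At this point I would invoke Theorem \ref{ThmDK}: the countable subgroup $\Gamma$ of the compact metrizable group $K$ is characterized, so there exists a sequence $\mathbf{u} = (u_n)$ in $K^\wedge$ with $\Gamma = s_\mathbf{u}(K)$. By Pontryagin duality applied to the discrete group $G_d$, we have $K^\wedge = ((G_d)^\wedge)^\wedge = G_d$, so $\mathbf{u}$ is a sequence in $G$. It remains to check that $\mathbf{u}$ characterizes $\tau$. Since $\Gamma = s_\mathbf{u}(K)$ is dense in $K = (G_d)^\wedge$, Fact \ref{Crit} implies that $\mathbf{u}$ is a $TB$-sequence. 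Then Fact \ref{f0} gives
\[
(G, \tau_\mathbf{u})^\ast \;=\; s_\mathbf{u}\bigl((G_d)^\wedge\bigr) \;=\; \Gamma \;=\; (G,\tau)^\ast,
\]
and since a precompact group topology is uniquely determined by its character group \cite{CR}, this forces $\tau_\mathbf{u} = \tau$, so $\mathbf{u}$ is a characterizing sequence for $(G,\tau)$.

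The genuinely substantial ingredient is Theorem \ref{ThmDK}; everything else amounts to translating the desired conclusion into the language of characterized subgroups (via Fact \ref{f0} and Fact \ref{Crit}) and then reading off the result. The only subtleties to verify carefully are (a) that metrizability of the precompact topology $\tau$ forces $\Gamma$ to be countable (which uses that $\Gamma$ is the dual of the compact metrizable completion) and (b) that $\Gamma$ is automatically dense in $(G_d)^\wedge$ because $(G,\tau)$ is Hausdorff precompact, hence MAP. No obstacle beyond these bookkeeping points is expected.
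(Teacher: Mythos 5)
Your proposal is correct and follows essentially the same route as the paper: identify $(G,\tau)^\ast$ as a countable dense subgroup of the compact metrizable group $(G_d)^\wedge$, apply Theorem \ref{ThmDK} to get a characterizing sequence $\mathbf{u}$ for it, use Fact \ref{Crit} to see $\mathbf{u}$ is a $TB$-sequence, and conclude $\tau_\mathbf{u}=\tau$ from the equality of dual groups (the paper cites Fact \ref{f1}, you cite its special case Fact \ref{f0} --- the same content). Your extra bookkeeping on why $\Gamma$ is countable and dense is exactly what the paper leaves implicit.
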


\begin{proof} Let $K=(G_d)^\wedge$ and $\tau$ be a metrizable precompact group topology on $G$. Then $H=(G,\tau)^\ast$ is a dense  countable subgroup of $K$ and $G$ can be identified  with $K^\wedge$ up to isomorphism. By Theorem \ref{ThmDK}, the subgroup $H$ admits a sequence $\uuu =(u_n)$ in $G$ such that
\[
 H =s_\uuu (K).% \{x\in K: u_n(x) \to 0 \mbox{ in }\T\}.
\]
Since $H$ is dense in $K$, $\uuu$ is a $TB$-sequence by Fact \ref{Crit}.  By Fact \ref{f1}, $(G, \Tub)^\ast = (G,\tau)^\ast$. Since precompact topologies uniquely determined by the set of all continuous characters, we have $\Tub = \tau$. So $\uuu =(u_n)$ characterizes $\tau$.
\end{proof}

The necessity of Theorem A will be proved in \S 3.2.

\begin{rmk}\label{Rem1} {\rm It is relevant to note that, if $G$ is a {\em non-metrizable } countable precompact abelian $ss$-characterized group, then $w(G)= \cont$. Indeed, it is well-known that $w(G)={\rm card}(G^\ast)$. Since $G$  is not metrizable, $w(G)> \aleph_0$. So, by Fact \ref{f1},  $H=s_\mathbf{u} ((G_d)^\wedge)$  is uncountable. As it was noticed after Theorem \ref{ThmDK}, $H$  has size $\cont$. Thus $w(G)=\cont$. }
\end{rmk}

The following fact  and its proof were kindly communicated to us by L. Aussenhofer:

\begin{pro}\label{lo} {\em (L. Aussenhofer)}
Let $G$ be a Hausdorff locally quasi-convex group with discrete Pontryagin dual $G^{\wedge}$.  Then  $G$ is precompact.
\end{pro}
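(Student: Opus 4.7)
The plan is to show that the topology $\tau$ of $G$ coincides with its Bohr modification $\tau^+=\sigma(G, G^\wedge)$. Since $G$ is locally quasi-convex it is $\rm MAP$ (as recalled in the preliminaries), so $\tau^+$ is Hausdorff; being the initial topology induced by maps into $\T$, it is also precompact. Granting $\tau=\tau^+$, precompactness of $(G,\tau)$ follows at once.

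The inequality $\tau^+\le\tau$ is automatic, so I need only verify $\tau\le\tau^+$. Local quasi-convexity provides a base of $\tau$-neighborhoods of $0$ consisting of quasi-convex sets. Fix such a $V$; by the very definition of quasi-convexity,
\[
V \;=\; \bigcap_{\chi \in V^{\triangleright}} \chi^{-1}(\T_+), \qquad V^{\triangleright} := \{\chi \in G^\wedge : \chi(V) \subseteq \T_+\}.
\]
The entire problem therefore reduces to showing that the polar $V^{\triangleright}$ is \emph{finite}: in that case $V$ is a basic zero-neighborhood for $\tau^+$, which is what we need.

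I would obtain finiteness by proving that $V^{\triangleright}$ is compact in the compact-open topology on $G^\wedge$; since $G^\wedge$ is assumed discrete, compact subsets are automatically finite. Compactness of $V^{\triangleright}$ is a classical Ascoli-type fact for characters (cf.\ \cite{Aus}). First, $V^{\triangleright}$ sits inside the compact group $\T^G$ and is pointwise-closed there, because being a homomorphism and taking values in the closed set $\T_+$ are both pointwise-closed conditions. Second, $V^{\triangleright}$ is equicontinuous: given $\varepsilon>0$, choose $n$ with $1/(4n)<\varepsilon$ and set $W := \{x \in G : kx \in V \text{ for } |k|\le n\}$; this is a $\tau$-neighborhood of $0$ (finite intersection of preimages of $V$ under continuous maps $x\mapsto kx$), and for every $\chi\in V^{\triangleright}$ and $x\in W$ one has $k\chi(x)=\chi(kx)\in\T_+$ for $|k|\le n$, which forces $\chi(x)\in[-1/(4n),1/(4n)]\subseteq B_\varepsilon(0)$. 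On an equicontinuous family of characters the pointwise and compact-open topologies coincide, and so $V^{\triangleright}$ is compact in $(G^\wedge,\text{c.-o.})$ as required.

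The step I expect to require the most care is precisely the equicontinuity/compactness of $V^{\triangleright}$, because it is where both the algebraic structure (orbits of $\chi(x)$ under multiplication by $k$) and the topological structure (continuity of $x\mapsto kx$) are combined. Everything else amounts to unwinding the definitions of polar and quasi-convex set, together with the trivial observation that compact subsets of a discrete group are finite. Note that local quasi-convexity is used only at the very last step, when we identify $V$ with a member of the local base of $\tau^+$; the compactness of polars of zero-neighborhoods is a feature of arbitrary topological abelian groups.
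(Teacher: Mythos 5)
Your proof is correct, but it follows a different route from the one in the paper. The paper's argument runs through the canonical map $\alpha_G\colon G\to G^{\wedge\wedge}$: by \cite[Proposition 6.10]{Aus} local quasi-convexity makes $\alpha_G$ injective and open onto its image, discreteness of $G^\wedge$ makes its compact subsets finite and hence equicontinuous so that $\alpha_G$ is continuous by \cite[Proposition 5.10]{Aus}, and therefore $G$ embeds into the compact group $G^{\wedge\wedge}=\bigl((G^\wedge)_d\bigr)^\wedge$. You instead prove the equivalent statement $\tau=\tau^+=\sigma(G,G^\wedge)$ directly on $G$, via the bipolar identity $V=\bigcap_{\chi\in V^{\triangleright}}\chi^{-1}(\T_+)$ for quasi-convex $V$ together with compactness (hence, by discreteness, finiteness) of the polar $V^{\triangleright}$. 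The two proofs are dual faces of the same mechanism --- your equicontinuity estimate for $V^{\triangleright}$ is exactly what underlies the openness of $\alpha_G$ in \cite[6.10]{Aus}, and discreteness of $G^\wedge$ enters once in each (continuity of $\alpha_G$ there, finiteness of polars here) --- but yours is more self-contained, avoiding the bidual entirely, and it yields the slightly more explicit conclusion that $\tau$ coincides with its Bohr modification; the paper's version is shorter granted the two cited results from Aussenhofer's thesis. One small point to keep in mind: the pointwise closure of $V^{\triangleright}$ in $\T^G$ could a priori contain discontinuous homomorphisms, but your equicontinuity estimate applies verbatim to any homomorphism carrying $V$ into $\T_+$, so such limits are automatically continuous and the closedness claim stands.
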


\begin{proof} Since $G$ is a Hausdorff locally quasi-convex group, $\alpha_{G}$ is an injective and open as mapping from $G$ onto $\alpha_G(G)$ by \cite[Proposition 6.10]{Aus}. Since $G^{\wedge}$ is  discrete, its  compact subsets are finite and hence equicontinuous. Thus, $\alpha_{G}$ is continuous  by \cite[Proposition 5.10]{Aus}. Therefore $\alpha_{G}$ is an embedding. So $G$ can be identified with a subgroup of the compact group $G^{\wedge\wedge}$ and hence $G$ is precompact.
\end{proof}

As  an immediate consequence we obtain the following nice and surprising characterization of precompactness for the metrizable abelian groups:

\begin{cor}
Let $G$ be a metrizable abelian group. Then the following are equivalent:
\begin{itemize}
\item[(i)]  $G$ is precompact.
\item[(ii)] $G$ is a locally quasi-convex group with discrete Pontryagin dual $G^{\wedge}$.
\end{itemize}
\end{cor}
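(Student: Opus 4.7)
I would verify the two implications separately. For $(ii) \Rightarrow (i)$ the preceding Proposition \ref{lo} applies directly, since metrizable groups are Hausdorff. The substantive direction is $(i) \Rightarrow (ii)$: once $G$ is metrizable and precompact, local quasi-convexity is automatic because every precompact topology is locally quasi-convex (as noted earlier in the paper), and the real task is to show that $G^\wedge$ is discrete.

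My approach is to pass to the completion $\hat G$, which is compact metrizable; hence $\hat G^\wedge$ is a discrete countable group, and every continuous character of $G$ extends uniquely to $\hat G$, producing an abstract group isomorphism $G^\ast \cong \hat G^\ast$, so $G^\wedge$ has countable underlying group. I would next prove that for every neighborhood $V$ of $0$ in $G$ the polar $V^\triangleright := \{\chi \in G^\wedge : \chi(V) \subseteq \T_+\}$ is finite: without loss of generality $V = W \cap G$ for some open $W \ni 0$ of $\hat G$; density of $G$ in $\hat G$ makes $V$ dense in $W$, so the continuous extension $\tilde\chi$ of any $\chi \in V^\triangleright$ sends $W$ into $\overline{\T_+} = \T_+$, and extension embeds $V^\triangleright$ into $W^\triangleright \subseteq \hat G^\wedge$, which is compact (being the polar of a neighborhood in the compact group $\hat G$) and hence finite as a compact subset of the discrete group $\hat G^\wedge$.

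It remains to build a single compact $K \subseteq G$ with $K^\triangleright = \{0\}$, yielding $\{0\}$ as a basic compact-open neighborhood of itself in $G^\wedge$ and forcing discreteness. Fix a decreasing countable neighborhood basis $V_1 \supseteq V_2 \supseteq \cdots$ at $0$ in $G$ and set $V_0 := G$; the finite polars then form an increasing chain $\{0\} = V_0^\triangleright \subseteq V_1^\triangleright \subseteq \cdots$ whose union is $G^\wedge$. Assuming $G^\wedge$ infinite (otherwise it is trivially discrete), I enumerate $G^\wedge \setminus \{0\} = \{\chi_n\}_{n \in \N}$ so that $k_n := \min\{k : \chi_n \in V_k^\triangleright\}$ is non-decreasing; this is possible because every layer $V_k^\triangleright \setminus V_{k-1}^\triangleright$ is finite, and it forces $k_n \to \infty$. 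Since $\chi_n \notin V_{k_n - 1}^\triangleright$, I pick a witness $x_n \in V_{k_n - 1}$ with $\chi_n(x_n) \notin \T_+$; then $x_n \to 0$, the set $K := \{0\} \cup \{x_n : n \in \N\}$ is compact in $G$, and by construction no nonzero character lies in $K^\triangleright$. The main obstacle is precisely this last step --- converting finite polars of neighborhoods into discreteness of the compact-open topology --- where metrizability of $G$ is used in an essential way, through the countable base that lets witnesses be harvested from arbitrarily small neighborhoods and assembled into a null sequence.
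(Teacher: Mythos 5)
Your proposal is correct, and both directions agree with the paper in spirit, but for the substantive implication (i) $\Rightarrow$ (ii) you take a genuinely different and more self-contained route. The paper disposes of discreteness in one line: the completion $K$ of $G$ is compact metrizable, and by the Au{\ss}enhofer--Chasco theorem the restriction map $K^{\wedge}\to G^{\wedge}$ is a \emph{topological} isomorphism, so $G^{\wedge}$ inherits discreteness from $K^{\wedge}$ (and local quasi-convexity is inherited from $K$ rather than quoted for precompact topologies, as you do). You instead reprove the relevant special case of that theorem by hand: extension of characters gives the algebraic identification $G^{\ast}\cong \widehat{G}^{\ast}$; the polar $V^{\triangleright}$ of each basic neighborhood $V=W\cap G$ embeds into the polar $W^{\triangleright}$, which is compact in the discrete group $\widehat{G}^{\wedge}$ and hence finite; and then the countable neighborhood base is used to harvest witnesses $x_n\in V_{k_n-1}$ with $\chi_n(x_n)\notin\T_+$ into a null sequence whose associated compact set $K=\{0\}\cup\{x_n\}$ satisfies $K^{\triangleright}=\{0\}$, forcing discreteness of the compact-open topology. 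All the steps check out (in particular $\T_+$ is closed, $V$ is dense in $W$, the layers $V_k^{\triangleright}\setminus V_{k-1}^{\triangleright}$ are finite so $k_n\to\infty$, and $G^{\triangleright}=\{0\}$ since $\T_+$ contains no nontrivial subgroup). What the paper's argument buys is brevity at the cost of citing a nontrivial duality result; what yours buys is a transparent, essentially elementary proof that isolates exactly where metrizability is used, relying only on the compactness of polars of neighborhoods and the discreteness of duals of compact groups.
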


\begin{proof}
$(i)\Longrightarrow (ii)$  Let $G$ be precompact. Then its completion $K$ is a metrizable compact group. So the algebraic isomorphism between $G^\wedge$ and $K^\wedge$ is also topological \cite{Aus, Cha}. Hence $G^\wedge$ is discrete. $G$ as a subgroup of the locally quasi-convex group $K$ is locally quasi-convex group itself.

$(ii)\Longrightarrow (i)$ is true by Proposition \ref{lo}.
\end{proof}

The following assertion is of independent interest and prepares the proof of Corollary A.

\begin{pro}\label{p21}
Let $G$ be a countably infinite  abelian group and $\mathbf{u} =(u_n)$ be a $TB$-sequence in $G$ such that $(G, \tau_\uuu)$ is precompact metrizable. Then
\begin{itemize}
\item[{\rm (a)}] $(G, \Tu)^\wedge$ is discrete.
\item[{\rm (b)}] $\tau_\uuu\ne \mathcal T_\uuu$.
\item[{\rm (c)}] $\tau_\uuu =(\Tu)_{\rm{lqc}}$, in particular, $\Tu$ is not locally quasi-convex.
 \end{itemize}
\end{pro}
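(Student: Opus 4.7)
The plan is to dispatch part (b) first as an immediate consequence of Fact \ref{fPre}, then to prove part (a) by producing a $\mathcal{T}_\uuu$-compact subset of $G$ with trivial polar, and finally to deduce part (c) from (a) through an equicontinuity argument.

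Part (b) will follow directly from Fact \ref{fPre}: the topology $\mathcal{T}_\uuu$ is never precompact for a $T$-sequence $\uuu$, while $\tau_\uuu$ is precompact by hypothesis, hence $\tau_\uuu\ne\mathcal{T}_\uuu$.

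For (a), let $K$ be the completion of the precompact metrizable group $(G,\tau_\uuu)$; it is compact metrizable, and by the standard duality for precompact groups $(G,\tau_\uuu)^\wedge$ is topologically isomorphic to the countable discrete group $K^\wedge$. By Fact \ref{f13} we have $(G,\mathcal{T}_\uuu)^\ast=(G,\tau_\uuu)^\ast=H$, so $H$ is countable. It therefore suffices to produce a $\mathcal{T}_\uuu$-compact set $L\subseteq G$ with trivial polar, i.e., such that for every non-trivial $\chi\in H$ some $x\in L$ satisfies $\chi(x)\notin\T_+$. I plan to construct such an $L$ as follows: enumerate $H\setminus\{0\}=\{\chi_n:n\in\N\}$ and, for each $n$, choose $x_n\in G$ with $\chi_n(x_n)\notin\T_+$ (possible since $\chi_n$ is non-trivial on the dense subgroup $G$ of $K$); then $L=\{0\}\cup\{x_n:n\in\N\}$ automatically has the required polar. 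The technical heart of the proof is to arrange that the $x_n$'s form a $\mathcal{T}_\uuu$-null sequence, which makes $L$ sequentially compact and hence compact in the sequential space $(G,\mathcal{T}_\uuu)$ (Remark \ref{zprot}). Here one exploits the universal property of $\mathcal{T}_\uuu$, the fact that $\uuu$ is itself $\mathcal{T}_\uuu$-null, and the finite-index conclusion $[G:\langle\uuu\rangle]<\infty$ from Theorem A, combining coset representatives with tail-combinations of $\uuu$ to manufacture $\mathcal{T}_\uuu$-null sequences containing the required witnesses.

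For (c), the inclusion $\tau_\uuu\le(\mathcal{T}_\uuu)_{\rm{lqc}}$ is automatic since $\tau_\uuu$ is a locally quasi-convex group topology coarser than $\mathcal{T}_\uuu$, while $(\mathcal{T}_\uuu)_{\rm{lqc}}$ is the finest such. For the reverse, a basic neighborhood of $0$ in $(\mathcal{T}_\uuu)_{\rm{lqc}}$ has the form $\mathrm{qc}(V)$ for some $V\in\mathcal{N}(G,\mathcal{T}_\uuu)$, and equivalently $\mathrm{qc}(V)$ is the set of all $x\in G$ with $\chi(x)\in\T_+$ for every $\chi\in H$ satisfying $\chi(V)\subseteq\T_+$. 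The family of such $\chi$ is equicontinuous, hence relatively compact in $(G,\mathcal{T}_\uuu)^\wedge$ by an Ascoli-type argument; by (a) this dual is discrete, so that family is finite. Consequently $\mathrm{qc}(V)$ is a finite intersection of subbasic $\tau_\uuu=T_H$-neighborhoods of $0$ of the form $\chi^{-1}(\T_+)$, proving $(\mathcal{T}_\uuu)_{\rm{lqc}}\le\tau_\uuu$ and hence equality. The ``in particular'' statement follows at once: if $\mathcal{T}_\uuu$ were locally quasi-convex, (c) would yield $\mathcal{T}_\uuu=(\mathcal{T}_\uuu)_{\rm{lqc}}=\tau_\uuu$, contradicting (b).

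The main obstacle will be the construction of the $\mathcal{T}_\uuu$-null sequence $(x_n)$ in part (a): since $\mathcal{T}_\uuu$ is not first countable (Remark \ref{zprot}), convergence cannot be checked against a countable neighborhood basis, so the universal property of $\mathcal{T}_\uuu$ together with the algebraic structure supplied by Theorem A must be combined to control the construction.
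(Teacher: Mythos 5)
Parts (b) and (c) of your plan are sound: (b) is exactly the paper's argument, and your route to (c) via the bipolar description of $\mathrm{qc}(V)$ and the finiteness of equicontinuous subsets of a discrete dual is a correct alternative to the paper's, which instead applies Proposition \ref{lo} to conclude that $(\Tu)_{\mathrm{lqc}}$ is precompact and then identifies it with $\Tub$ via compatibility. The problem is that (c) rests entirely on (a), and your proof of (a) has a genuine gap: the one step you defer as ``the technical heart'' --- arranging that the witnesses $x_n$ form a $\Tu$-null sequence --- is the whole difficulty, and the sketch offered (``combining coset representatives with tail-combinations of $\uuu$'') does not survive scrutiny. By the Protasov--Zelenyuk description of $(G,\Tu)$ (see \cite{PZ}), every compact subset, in particular every null sequence together with its limit, is contained in a set of the form $F + A^*(0) + \cdots + A^*(0)$ with $F$ finite and a \emph{bounded} number $k$ of summands, where $A^*(0)=\{0\}\cup\{\pm u_n : n\in\N\}$. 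On the other hand every $\chi \in s_\uuu\bigl((G_d)^\wedge\bigr)$ satisfies $\chi(u_n)\to 0$, so on tail-combinations of length $\le k$ the value of $\chi$ is at most $k\cdot\sup_{n\ge m}|\chi(u_n)|$, which is small for large $m$; hence tail-combinations can only witness $\chi(x)\notin\T_+$ for characters that are not already small on an initial segment of $\uuu$ and on the finitely many coset representatives. Making this work uniformly over all nontrivial $\chi\in H$ requires a genuine argument that you do not supply, and the existence of a $\Tu$-compact set with trivial polar is in fact \emph{equivalent} to the discreteness you are trying to prove, so it cannot be assumed. (Invoking the finite-index part of Theorem A here is not circular, since its proof in \S 3.2 does not use this proposition, but it does not by itself produce the null sequence.)

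For comparison, the paper disposes of (a) in two lines: by Fact \ref{f13}, $(G,\Tu)^\ast=(G,\tau_\uuu)^\ast$ is countable because $\tau_\uuu$ is precompact metrizable; by \cite{Ga} the dual group $(G,\Tu)^\wedge$ is Polish; and a countable Polish group is discrete by the Baire category theorem. If you want to keep your constructive approach you must actually exhibit the compact witness set, which amounts to redoing a nontrivial piece of the theory; otherwise the clean fix is to replace your (a) by the Polish-group argument, after which your version of (c) goes through.
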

\begin{proof}
(a) By Fact \ref{f1} we have that $(G, \Tub)^\ast=(G, \Tu)^\ast$. Since $(G, \Tub)$ is precompact
metrizable, $(G, \Tub)^\ast = (G, \Tu)^\ast$ is countable.
By \cite{Ga}, $(G, \Tu)^\wedge$ is a Polish space, so it is discrete as a  countable Polish group.

% The argument is concluded by observing that
%$(G, \Tu)^\wedge$ is a Polish space, so it is discrete as a  countable Polish group.
%One can see that $(G, \Tu)^\wedge$ is a Polish by either directly applying \cite{Ga}, or arguing as follows.
%By Remark \ref{zprot}(b) $(G, \Tu)$ is a $k$-space, while it is hemicompact by \cite[Lemma 2]{Ga} (or \cite[Corollary 4.1.5]{PZ}).
%So, $(G,\Tu)^\wedge$ is a complete metrizable (\footnote{\tt Vaja, your solution was included as a second argument, along with the direct reference to Saak's theorem.
%Please, provide a reference for what you claim here}) (so Polish) topological abelian group.

(b) follows from  Fact \ref{fPre}.

(c) From (a) and the compatibility of $(\Tu)_{\rm{lqc}}$ with $\Tu$ we get that $(G,(\Tu)_{\rm{lqc}})^\wedge$ is discrete. From this by Proposition \ref{lo} we get that $(\Tu)_{\rm{lqc}}$ is precompact.  This implies that $(\Tu)_{\rm{lqc}}= \Tub$, as $\Tub = \Tu^+$.
\end{proof}

\medskip

\noindent {\bf Proof of Corollary A.} By Theorem \ref{MainTheorem}, there exists a $TB$-sequence $\mathbf{u} =(u_n)$ in $G$ such that
$\tau = \tau_\uuu$. As $\mathbf{u} =(u_n)$ is also a $T$-sequence, we can consider the Hausdorff group topology $\eta = \Tu$ on $G$.

 Clearly, $\tau\le \eta$. The topology $\eta $ is compatible with $\tau$ because of $\eta^+ =\tau $ by Fact \ref{f13}. We have $\tau\ne \eta$ by Proposition \ref{p21}(b). By Proposition \ref{p21}(a), $(G,\eta)^\wedge$ is discrete. This implies that $(G,\tau)^\wedge$ is discrete as well, and so $\eta$ is strongly compatible with $\tau$.    $\Box$

%%%%%%%%%%%%%%%%%%%%%%%%%%%%%%%%%%%%%%%%%%%%%%%%%%

\section{Characterizing sequences of arbitrary precompact abelian groups} \label{Char3}

\subsection{The finest precompact extension of a precompact group topology}

For the use in the forthcoming proof of Theorem B, we recall here some well known facts from \cite{CR}. For a discrete group $X$ and a dense subgroup $L$ of the compact dual $(X_d)^\wedge$ we let $T_{L,X}$ (or simply, $T_L$, when no confusion is possible) denote the weak topology $\sigma(X,L)$ of $X$ induced by the subgroup $L$.

\begin{fact} \label{f31}
\begin{itemize}
\item[(a)]  The correspondence $L \mapsto  T_{L,X}$ between dense subgroups $L$ of $(X_d)^\wedge$ and precompact topologies on $X$ is bijective and monotone.
\item[(b)] Let $Y$ be a subgroup of $X$ with inclusion $\iota : Y \hookrightarrow X$, and let $L$ and $M$ be  dense subgroups of $(Y_d)^\wedge$ and $(X_d)^\wedge$
respectively. Then the inclusion $(Y, T_{L,Y})\to (X, T_{M,X})$ is continuous (resp., an embedding) if and only if $\iota^\wedge(M) \subseteq L$ (resp., $\iota^\wedge(M) = L$).
\end{itemize}
\end{fact}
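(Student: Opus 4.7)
My approach will be to realize both parts through the canonical dense embedding $\alpha_L : X \to L^\wedge$, $x \mapsto (\chi \mapsto \chi(x))$, where $L$ is viewed as a discrete group so that $L^\wedge$ is compact. The key observation is that $T_{L,X} = \sigma(X,L)$ is exactly the initial topology on $X$ induced by $\alpha_L$ from the compact group $L^\wedge$ (pointwise convergence topology). Injectivity of $\alpha_L$ will follow from the density of $L$ in $(X_d)^\wedge$: for any $x \neq 0$, the set $\{\chi \in (X_d)^\wedge : \chi(x)=0\}$ is a proper closed subgroup of $(X_d)^\wedge$ and hence cannot contain the dense subgroup $L$.

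For (a), once $\alpha_L$ is recognized as an injective map into a compact group, $T_{L,X}$ is automatically a Hausdorff precompact group topology on $X$. The annihilator of $\alpha_L(X)$ inside $L = (L^\wedge)^\wedge$ is trivially $\{0\}$ by construction, so $\alpha_L(X)$ is dense in $L^\wedge$ by Pontryagin duality. To identify $(X, T_{L,X})^* = L$, the inclusion $L \subseteq (X, T_{L,X})^*$ is immediate from the definition of the weak topology, while any $\chi \in (X, T_{L,X})^*$ extends uniquely by uniform continuity to a continuous character of the completion $L^\wedge$, which by Pontryagin duality belongs to $L$. Combining this with the Comfort-Ross result already cited in the paper, namely that every precompact topology $\tau$ on $X$ equals $\sigma(X,(X,\tau)^*)$, gives that $L \mapsto T_{L,X}$ and $\tau \mapsto (X,\tau)^*$ are mutually inverse bijections. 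Monotonicity is obvious from $T_{L,X} = \sigma(X,L)$.

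For (b), continuity of $\iota : (Y, T_{L,Y}) \to (X, T_{M,X})$ is equivalent to continuity of each composite $\chi \circ \iota = \iota^\wedge(\chi)$ for $\chi \in M$; by part (a), $\iota^\wedge(\chi)$ is $T_{L,Y}$-continuous precisely when it belongs to $L$, yielding the criterion $\iota^\wedge(M) \subseteq L$. When this holds and $\iota$ is continuous, the subspace topology $T_{M,X}|_Y$ on $Y$ is generated by the family of restrictions $\{\chi \circ \iota : \chi \in M\} = \iota^\wedge(M)$, hence coincides with $T_{\iota^\wedge(M), Y}$; by the bijectivity from (a), this equals $T_{L,Y}$ iff $\iota^\wedge(M) = L$, which is the embedding criterion.

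The main obstacle I anticipate is the identification $(X, T_{L,X})^* = L$, i.e.\ ruling out continuous characters of $(X, T_{L,X})$ lying outside $L$. The cleanest route passes through the dense embedding $\alpha_L$ into a compact group, uniform extension of characters to the completion, and Pontryagin duality $(L^\wedge)^\wedge = L$ for the discrete group $L$. Once this identification is secured, everything else reduces to straightforward unwinding of the definition $T_{L,X} = \sigma(X,L)$ together with the functoriality of the dual.
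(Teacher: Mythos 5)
Your proposal is correct. For part (b) your argument is essentially identical to the paper's: since both topologies are weak topologies, continuity of the inclusion is tested against the characters in $M$, and the criterion $\iota^\wedge(M)\subseteq L$ (resp.\ $=L$) drops out from the identification $(Y,T_{L,Y})^\ast=L$. The difference is in part (a), where the paper simply writes ``Item (a) comes from [Comfort--Ross]'' and proves nothing, whereas you reconstruct the Comfort--Ross correspondence from scratch via the dense embedding $\alpha_L:X\to L^\wedge$, extension of characters to the completion, and Pontryagin duality $(L^\wedge)^\wedge=L$; this buys a self-contained treatment at the cost of invoking duality for compact/discrete groups, and you still lean on Comfort--Ross for surjectivity (every precompact $\tau$ equals $\sigma(X,(X,\tau)^\ast)$), so the citation is not fully eliminated. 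One small point worth making explicit in the embedding criterion: to apply the bijection of (a) to conclude $T_{\iota^\wedge(M),Y}=T_{L,Y}\iff \iota^\wedge(M)=L$, you need $\iota^\wedge(M)$ to be a dense subgroup of $(Y_d)^\wedge$; this holds because $\iota^\wedge:(X_d)^\wedge\to (Y_d)^\wedge$ is a continuous surjection (by divisibility of $\T$) and $M$ is dense in $(X_d)^\wedge$, but it should be said.
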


Item (a) comes  from \cite{CR}. To verify (b), it suffices to note, that since both $T_{L,Y}$ and $T_{M,X}$ are weak topologies, the continuity of $(Y, T_{L,Y})\to (X, T_{M,X})$ is equivalent to the fact that $\chi \circ \iota = \chi\restriction_Y$ is continuous whenever $\chi \in  (X, T_{M,X})^\wedge = M$, i.e., $\chi \circ \iota = \iota^\wedge(\chi) \in L= (Y, T_{L,Y})^\wedge$ if $\chi \in M$. This simply means $\iota^\wedge(M) \subseteq L$, as put in (b). The version in brackets is verified similarly (see also the proof of \cite[Theorem 10.1]{DS} for the detailed routine verification of this fact).
\\

\medskip

\noindent{\bf Proof of Theorem B.} We split the verification of (a), namely the equality $(\bar{\zeta})^+=\zeta^*$, in two steps.

 (1){\it The topology $(\bar{\zeta})^+$ is precompact and $(\bar{\zeta})^+|_H={\zeta}$.}\\

 To verify $(\bar{\zeta})^+|_H={\zeta}$, note that $H$ is dually closed and dually embedded in $(G,\bar{\zeta})$, being open in $(G,\bar{\zeta})$.   Hence
$$
(H, \zeta)^\ast=\{\chi|_H: \chi\in  (G,\bar{\zeta})^\ast \}.
$$
From the last equality and Fact \ref{f31}(b), we get that $(\bar{\zeta})^+|_H={\zeta}$. Since the subgroup $H$ is dually closed (i.e., $(\bar{\zeta})^+$-closed), the equality $(\bar{\zeta})^+|_H={\zeta}$ implies that the $(\bar{\zeta})^+$-closure of $\{0\}$ in $G$ coincides with its ${\zeta}$-closure in $H$, namely $\{0\}$. So,  $(G,\bar{\zeta})$ is MAP, hence   $(\bar{\zeta})^+$ is precompact.

(2) {\it Let $\tau'$ be a precompact  topology on $G$ with $\tau'|_H=\zeta$. Then $\tau'\le (\bar{\zeta})^+$.}\\
By Fact \ref{f31}(a), it suffices to show that
\begin{equation}\label{8march}
(G,\tau')^\ast\subseteq (G,\bar{\zeta})^\ast\,.
\end{equation}
So, fix $\chi\in (G,\tau')^\ast$. As $\tau'|_H=\zeta$, we have also that $\chi|_H\in (H,\zeta)^\ast$. Since $H$ is open in $(G,\bar{\zeta})$ from $\chi|_H\in (H,\zeta)^\ast$ and $\bar{\zeta}|_H=\zeta$ we get easily that $\chi\in (G,\bar{\zeta})^\ast$ and (\ref{8march}) is proved.
\par
From (1) and (2) we get that the topology $(\bar{\zeta})^+$ is indeed the finest precompact extension of $\zeta$.
\par

\medskip

(b) The equivalence  (b$_1$) $\Leftrightarrow$ (b$_4$) immediately follows from (a).

For the remaining part of the proof of (b), we need to clarify first the properties of the duals $N=(H,\zeta)^*$ and $Q = (G,\tau)^*$
in view of Fact \ref{f31}. By item (a) of that fact, $N$ and $Q$ are dense subgroups of the compact groups $(H_d)^\wedge$ and $(G_d)^\wedge$, respectively. Let $j: H_d \hookrightarrow G_d$ be the inclusion and $j^\wedge: G_d^\wedge \to H_d^\wedge$ be the corresponding adjoint continuous surjective homomorphism. By Fact \ref{f31}(b), the embedding $(H,\zeta)\to (G,\tau)$ yields
\begin{equation} \label{e*}
j^\wedge(Q) = N.
\end{equation}
Hence, the subgroup $A= (j^\wedge)^{-1} (N)$ of $G_d^\wedge$ contains $Q$ and obvisouly satisfies $j^\wedge(A) = N$. So, by Fact \ref{f31}(b),  the precompact topology $T_A := T_{A,G}$ on $G$ induces on $H$ the original topology $\zeta= T_N$, i.e.,  $T_A$ is an extension of $\zeta$. Moreover, the choice of $A$ implies that $A$ is the largest subgroup of $G_d^\wedge$ with $j^\wedge(A) = N$. Therefore, $T_{A}$ is the finest precompact topology on $G$ inducing on $H$ the original topology $\zeta= T_N$, i.e.,
\begin{equation} \label{e**}
T_{A}=\zeta^\ast .
\end{equation}
At this point we can prove the remaining equivalence of (b).

(b$_1$) $\Leftrightarrow$ (b$_3$). In view of (\ref{e**}), it suffices to note that (b$_1$) means $\tau = \zeta^\ast$, while (b$_3$) means $\tau =T_A$.

To prove {\rm (b$_3$)} $\Leftrightarrow$ {\rm (b$_2$)}, let  $\pi: G \to G/H$ be the canonical map. The continuous characters of the quotient $G/H$, when equipped with the quotient topology of $\tau$, are precisely those  coming from the factorization via $\pi$, of the  $\tau$-continuous characters of $G$ vanishing on $H$, i.e., $\pi^\wedge((G/H)^*) = H^\bot \cap Q.$
Therefore, $H$ is $B$-embedded in $G$ precisely when $H^\bot \subseteq Q$. By (\ref{e*}), $H^\bot \subseteq Q$ is equivalent to
$Q=A$, i.e., $\tau = T_A$. As mentioned above, this is precisely (b$_3$).
$\Box$

The {\it quasi-component} $Q(G)$ of a topological group $G$ is the intersection of all clopen sets of $G$ containing the neutral element of $G$. One can prove that $Q(G)$ is always a normal subgroup of $G$ in the general case when $G$ need not be abelian (\cite{Dz}). A topological group $G$ is

\begin{itemize}
\item {\em totally disconnected} when $Q(G)$ is trivial;
\item {\em hereditarily disconnected} when the connected component $c(G)$ of $G$ is trivial.
\end{itemize}

Since the quasi-component obviously contains the connected component,  totally disconnected groups are hereditarily disconnected.

\begin{pro}\label{c41}
Let  $H$  be a $B$-embedded subgroup  of a precompact abelian group $G$. Then
\begin{itemize}
\item[{\rm (a)}] if $H$ is of infinite index, then $w(G/H) = 2^{|G/H|}$;
\item[{\rm (b)}]  if $H$ is  of infinite index, then $w(G) \geq w(G/H) \geq \cont$;
\item[{\rm (c)}] if $G$ is metrizable, then $G/H$ is finite;
\item[{\rm (d)}]   all subgroups containing $H$ are $B$-embedded as well.
\item[{\rm (e)}]  the subgroup $H$ contains the quasi-connected component $Q(G)$ of $G$.
\end{itemize}
\end{pro}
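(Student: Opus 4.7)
\textbf{Proof plan for Proposition \ref{c41}.}

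The backbone of the argument is that the $B$-embeddedness of $H$ lets us identify the quotient topology on $G/H$ with the Bohr topology of the discretization $(G/H)_d$, so the continuous dual of the quotient is the full algebraic dual $(G/H)_d^\wedge$. Parts (a)--(d) will follow from standard duality/weight arithmetic, while (e) reduces to a quasi-component argument.

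\medskip

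For (a), set $A := G/H$ and note that by $B$-embeddedness the quotient topology on $A$ equals $\sigma(A, A_d^\wedge)$, which is precompact with continuous dual $A_d^\wedge$. For a precompact abelian group the weight equals the cardinality of the continuous dual, hence $w(G/H) = |A_d^\wedge|$. Since $A$ is infinite, the Pontryagin dual $A_d^\wedge$ is a compact group of weight $|A|$ and cardinality $2^{|A|}$ (the standard formula for duals of infinite discrete abelian groups), giving the stated equality.

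\medskip

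For (b), the quotient map $\pi_H : G \to G/H$ is a continuous surjective homomorphism, so $w(G) \ge w(G/H)$; combined with (a) and $|G/H|\ge \aleph_0$, this yields $w(G)\ge w(G/H)\ge\cont$. Part (c) is then immediate: a metrizable group has countable weight, so (b) prevents $H$ from having infinite index.

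\medskip

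For (d), let $H\subseteq K\subseteq G$ and let $\chi:G/K\to\T$ be an algebraic character. The canonical surjection $q:G/H\twoheadrightarrow G/K$ produces the algebraic character $\chi\circ q:G/H\to\T$. By the $B$-embeddedness of $H$, this character is continuous in the quotient topology of $G/H$, hence $(\chi\circ q)\circ\pi_H=\chi\circ\pi_K$ is continuous on $G$. The universal property of the quotient map $\pi_K$ then forces $\chi$ to be continuous on $G/K$, so $K$ is $B$-embedded.

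\medskip

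The substantive step is (e), and it is the one I expect to be the main obstacle. The general pattern is: continuous homomorphisms send quasi-components into quasi-components, so $\pi_H(Q(G))\subseteq Q(G/H)$; consequently $Q(G)\subseteq H$ would follow from $Q(G/H)=\{0\}$. By $B$-embeddedness the quotient topology on $G/H$ coincides with the Bohr topology of $(G/H)_d$, so one needs to argue that this precompact topology is totally disconnected in the quasi-component sense. For elements of finite order the usual trick works: an algebraic character into a finite cyclic subgroup of $\T$ is automatically continuous and its kernel is clopen (the finite image is a finite Hausdorff subspace of $\T$, so singletons in the image pull back to clopen sets). The subtle point is separating arbitrary (possibly divisible, torsion-free) cosets by clopen subsets, which is where one has to use that every subgroup containing $H$ is $B$-embedded (part (d)) together with the fact that, inside the Bohr topology coming from a precompact group, the collection of clopen subgroups is rich enough to realize $\bigcap_{K\supseteq H,\, K\text{ clopen}} K=H$. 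From this one concludes $Q(G)\subseteq H$.
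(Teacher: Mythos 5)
Parts (a)--(d) of your proposal are essentially correct and follow the same route as the paper: (a) is Kakutani's theorem $|\mathrm{Hom}(X,\T)|=2^{|X|}$ combined with the identity ``weight of a precompact group equals the cardinality of its dual'', (b) and (c) are immediate consequences, and your explicit verification of (d) via the universal property of the quotient map is a clean expansion of the paper's one-line remark. One small slip in (c): you assert that a metrizable group has countable weight, which is false in general (an uncountable discrete group is metrizable); what saves the argument is that $G$ is precompact, hence totally bounded, hence separable and second countable when metrizable.

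The genuine gap is in (e), exactly where you anticipated trouble. Your reduction is correct and matches the paper: $\pi_H(Q(G))\subseteq Q(G/H)$, so everything hinges on showing $Q(G/H)=\{0\}$ for $G/H$ carrying the Bohr topology of its discretization. But the mechanism you propose --- that the clopen subgroups $K\supseteq H$ satisfy $\bigcap_{K}K=H$ --- fails. In a precompact group every open subgroup has finite index (finitely many translates cover the group, and the quotient is discrete precompact, hence finite), so the clopen subgroups of $G$ containing $H$ correspond exactly to the finite-index subgroups of $G/H$. If $G/H$ is divisible (e.g.\ algebraically isomorphic to $\Q$, which occurs already for $G=\Q$ with its Bohr topology and $H=\{0\}$), the only such subgroup is $G$ itself, and your intersection equals $G$, not $H$. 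The clopen sets witnessing triviality of $Q(G/H)$ are genuinely not subgroups; the fact that a discrete abelian group with its Bohr topology is zero-dimensional (hence has trivial quasi-component) is a non-trivial theorem, which the paper simply invokes by citing \cite{S}. Your sketch for torsion elements also needs care (a character separating a torsion element from $0$ need not have finite image, so a clopen preimage is not automatic), but the divisible torsion-free case is where the approach as stated breaks down irreparably: to complete (e) you must either cite or reprove the zero-dimensionality of the Bohr topology rather than argue through clopen subgroups.
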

\begin{proof} (a) is a well-known property of the Bohr topology and follows from Kakutani's theorem $|{\rm{Hom}}(X,\T) |= 2^{|X|}$ for an infinite discrete abelian group $X$.

(b) and (c) follow immediately from (a).

(d) follows from the well-known property of the Bohr topology (every subgroup of $(G_d)^+$ is closed).

(e) It is well-known that abelian groups equipped with their Bohr topology are zero-dimensional (\cite{S}). Since the  quasi-connected component of a zero-dimensional group is trivial, we conclude that $Q(G/H)$ is trivial (here we used the fact that $H$ being $B$-embedded is closed in $G$). Since inverse images of clopen sets along the canonical map $h: G \to G/H$ are still clopen,
we deduce that $ H = \ker h$ is an intersection of clopen sets in $G$, hence $Q(G) \subseteq H$.
\end{proof}

Since countable groups are hereditarily disconnected (actually, zero-dimensional), from Proposition \ref{c41}(e) and Theorem B we deduce

\begin{cor} \label{NEEW}  Every characterized precompact group is hereditarily disconnected.
\end{cor}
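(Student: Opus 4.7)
My approach is to combine the reduction principle of Theorem C with the containment of the quasi-connected component guaranteed by Proposition \ref{c41}(e). Let $(G,\tau)$ be an ss-characterized precompact abelian group. Applying Theorem C in the direction (i)$\Rightarrow$(ii), I would extract a countable $B$-embedded subgroup $H$ of $(G,\tau)$. Alternatively, one may take $H=\langle\uuu\rangle$ for any characterizing sequence $\uuu$ of $\tau$: by Proposition \ref{p44}, $\tau$ is the finest precompact extension of $\tau|_H$, so Theorem B (b$_1$)$\Rightarrow$(b$_2$) yields the $B$-embeddedness of $H$ directly, in line with the remark preceding the corollary that both Theorem B and Proposition \ref{c41}(e) are the only inputs needed.

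With such an $H$ in hand, Proposition \ref{c41}(e) gives $Q(G)\subseteq H$, where $Q(G)$ denotes the quasi-connected component of the neutral element of $(G,\tau)$. Since the connected component $c(G)$ is always contained in $Q(G)$, I would deduce the chain
$$c(G)\subseteq Q(G)\subseteq H,$$
so $c(G)$ is a connected subset of the countable topological space $H$ (equipped with the subspace topology inherited from $(G,\tau)$).

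The final step is to observe that $H$, being a countable Hausdorff topological space, is hereditarily disconnected. This rests only on the classical fact that every connected Hausdorff space with more than one point has cardinality at least $\cont$; thus every connected subset of a countable Hausdorff space is a singleton. Applied to the connected subset $c(G)\subseteq H$, this forces $c(G)=\{0\}$, which is precisely the conclusion that $(G,\tau)$ is hereditarily disconnected.

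I do not anticipate a substantial obstacle: once the reduction to a countable $B$-embedded subgroup is performed, the remainder of the argument is a two-line chain of inclusions followed by an elementary cardinality observation. The only conceptual point worth emphasising is that $B$-embeddedness (rather than mere closedness) of $H$ is essential, because it is what traps $Q(G)$ inside the countable subgroup $H$; this, in turn, ultimately traces back to the zero-dimensionality of abelian groups in the Bohr topology, used in the proof of Proposition \ref{c41}(e).
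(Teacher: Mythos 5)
Your proof is correct and follows exactly the paper's (very brief) argument: extract a countable $B$-embedded subgroup $H$ via Theorem B together with Proposition \ref{p44} (or Theorem C), apply Proposition \ref{c41}(e) to obtain $c(G)\subseteq Q(G)\subseteq H$, and conclude because countable groups are hereditarily disconnected. One small caveat: the ``classical fact'' you invoke is false for general Hausdorff spaces (countable connected Hausdorff spaces do exist, e.g.\ Golomb's and Bing's examples); it is valid for regular --- in particular Tychonoff --- spaces, which suffices here since $H$ carries the subspace topology of a topological group and is therefore Tychonoff.
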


%\begin{rmk}
%{\em It is clear that a (necessarily $TB$-) sequence
%$\uuu = (u_n)$ in $G$ characterizes  a precompact topology $\tau$ if
%\begin{equation} \label{star}
%(G,\tau)^\ast=\{\chi\in {\rm{Hom}}(G,\T):\chi(u_n) \to 0 \mbox{ in
%}\T \,\}.
%\end{equation}
%One can use the equality (\ref{star}) to extend the notion of characterized topology to arbitrary MAP topologies in place of precompact topologies.
%Namely,  one can say that a sequence
%$\uuu = (u_n)$ in $G$ {\em characterizes} a MAP group topology $\tau$ of a group $G$ (and say that $\uuu$ is {\em a characterizing sequence} for $(G,\tau))$, if (\ref{star}) holds true.
%It is clear, that $\uuu$  characterizes $\tau$ precisely when $\uuu$  characterizes $\tau^+$. Therefore, the study of the
%characterized MAP topologies is immediately reduced to that of the precompact ones. This applies in various situations
%(e.g., in the above corollary one can conclude that characterized MAP groups are hereditarily disconnected, since this property is preserved by
%taking finer topologies). }
%\end{rmk}

%%%%%%%%%%%%%%%%%%%%%%%%%%%%%%%%%%%%%%%%%%%%%%%%%%%

\subsection{Proofs of Theorems  A, C  and Corollaries C1, C2, C3}

We start this section from the proof of Proposition \ref{p44}.

\medskip

\noindent{\bf Proof of Proposition \ref{p44}.}
By the definition of $\Tub (H)$, we have $\Tub|_{H} \leq \Tub (H)$. Let $\tau^\ast$ be the finest precompact extension of $\Tub (H)$ that exists by Theorem  B. Clearly, $\uuu\to 0$ in $\tau^\ast$. Thus, by definition, $\tau^\ast \leq\Tub$ and hence $ \tau^\ast|_{H} =\Tub (H) \leq \Tub|_{H}$. So $\Tub|_{H} =\Tub (H)$. This also means that $\Tub$ is an extension of $\Tub (H)$ and hence $\Tub \leq \tau^\ast $. So $\tau^\ast =\Tub$.
$\Box$

\medskip

The following lemma is a folklore fact.

\begin{lemma} \label{l41}
Let $(G,\tau)$ be a precompact abelian group and $H$ be an arbitrary closed subgroup of $G$. Then $H$ is dually closed and dually embedded in $(G,\tau)$.
\end{lemma}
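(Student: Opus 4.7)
The plan is to reduce everything to the classical compact case via the precompact completion. Let $\widetilde{G}$ denote the (Hausdorff) completion of $(G,\tau)$, which is a compact abelian group since $\tau$ is precompact, and write $\widetilde{H}$ for the closure of $H$ in $\widetilde{G}$. Because $H$ is closed in $G$ and the inclusion $G\hookrightarrow\widetilde{G}$ is a topological embedding onto a dense subgroup, one has $\widetilde{H}\cap G = H$, and the subspace topology induced on $H$ by $\widetilde{G}$ agrees with $\tau|_H$. Thus it suffices to prove the two properties for the closed subgroup $\widetilde{H}$ of the compact abelian group $\widetilde{G}$, and then restrict characters from $\widetilde{G}$ back to $G$.

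For dual closedness, pick $x\in G\setminus H$. By the remark above, $x\notin \widetilde{H}$. The classical Pontryagin theory for compact abelian groups (applied via the quotient $\widetilde{G}/\widetilde{H}$, which is again a compact abelian group and hence has enough characters to separate points) gives a continuous character $\widetilde{\chi}:\widetilde{G}\to\T$ with $\widetilde{\chi}(\widetilde{H})=\{0\}$ and $\widetilde{\chi}(x)\neq 0$. Restriction $\chi:=\widetilde{\chi}|_G$ is a member of $G^{\ast}$ witnessing dual closedness of $H$.

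For dual embeddedness, take $\varphi\in H^{\ast}$. Since $H$ is dense in the compact (hence complete) group $\widetilde{H}$ and $\T$ is complete, $\varphi$ admits a unique continuous extension $\widetilde{\varphi}:\widetilde{H}\to\T$. By the classical result that closed subgroups of compact abelian groups are dually embedded (a direct consequence of Pontryagin duality, or of the fact that $\T$ is divisible and the topology of $\widetilde{G}$ is initial with respect to $\widetilde{G}^{\wedge}$), $\widetilde{\varphi}$ extends further to a continuous character $\widetilde{\chi}:\widetilde{G}\to\T$. Then $\chi:=\widetilde{\chi}|_G$ lies in $G^{\ast}$ and satisfies $\chi|_H=\varphi$.

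The only genuine verifications are that the topology $H$ inherits from $G$ coincides with the one it inherits from $\widetilde{G}$ (immediate from the embedding $G\hookrightarrow\widetilde{G}$) and that $\widetilde{H}\cap G=H$ (which uses that $H$ is $\tau$-closed in $G$). Neither should present any obstacle; the step that carries the content is the appeal to the compact abelian case, for which standard Pontryagin duality suffices. No new machinery beyond precompactness and the existence of the completion is needed.
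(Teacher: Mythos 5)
Your proposal is correct and follows essentially the same route as the paper: pass to the compact completion, use that the closure $\bar H$ of $H$ is dually closed and dually embedded in the compact group $\bar G$, and restrict characters back, noting that $H$ and $\bar H$ (resp.\ $G$ and $\bar G$) have the same continuous characters by density and completeness of $\T$. You merely spell out the details (e.g.\ $\bar H\cap G=H$ from closedness of $H$) that the paper leaves implicit.
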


\begin{proof}
Let $\bar{G}$ be the compact completion of $G$. It is well-known that the closure $\bar{H}$ of $H$ is dually closed and dually embedded in $\bar{G}$. Since $\bar{H}$ and $H$ as well as $\bar{G}$ and $G$ have the same set of continuous characters, $H$ is dually closed and dually embedded in $(G,\tau)$.
\end{proof}

As an immediate corollary of Theorem B, Proposition \ref{p44}, Lemma \ref{l41} and Proposition \ref{c41}(d) we obtain:
\begin{pro} \label{p45}
Let $\uuu$ be a $TB$-sequence in an abelian group $G$ and $H$ be an arbitrary subgroup of $G$ containing $\langle\uuu\rangle$. Then
\begin{enumerate}
\item $H$ is dually closed and dually embedded in $(G,\Tub)$;
\item the quotient topology on $G/H$ is the Bohr topology, i.e., $G/H = \left( (G/H)_d \right)^+$.
\end{enumerate}
\end{pro}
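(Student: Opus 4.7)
The plan is to derive both claims from the single observation that $H$ is $B$-embedded in $(G,\Tub)$, which is essentially forced by the maximality of $\Tub$.

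First, I invoke Proposition \ref{p44}: since $H \supseteq \langle\uuu\rangle$, the topology $\Tub$ on $G$ is the finest precompact extension of $\Tub|_{H} = \Tub(H)$. Applying the equivalence $(\mathrm{b}_1)\Leftrightarrow(\mathrm{b}_2)$ of Theorem B with $\zeta := \Tub|_{H}$ and $\tau := \Tub$, I conclude that $H$ is $B$-embedded in $(G,\Tub)$. Note that Proposition \ref{p44} applies to any $H \supseteq \langle\uuu\rangle$, so the conclusion holds at the stated level of generality without having to pass through the smallest case $H = \langle\uuu\rangle$ and then enlarge via Proposition \ref{c41}(d).

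For item (1), I use the remark following the definition of $B$-embedded, namely that every $B$-embedded subgroup is automatically closed. Hence $H$ is a closed subgroup of the precompact group $(G,\Tub)$, and Lemma \ref{l41} immediately yields that $H$ is dually closed and dually embedded in $(G,\Tub)$.

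For item (2), I unpack the definition of $B$-embeddedness: it says exactly that every algebraic character $G/H \to \T$ is continuous with respect to the quotient topology $q$ on $G/H$. Since quotients of precompact groups are precompact, $q$ is a precompact group topology whose continuous dual coincides with $\mathrm{Hom}(G/H,\T) = ((G/H)_d)^\ast$; the Bohr modification $((G/H)_d)^+$ has the same dual by definition. By Fact \ref{f31}(a), a precompact topology is determined by its dual, so $q = ((G/H)_d)^+$, which is the claim. There is really no substantive obstacle here: the work was already carried out in Proposition \ref{p44} and Theorem B, and both items reduce to one-line applications once $B$-embeddedness is in hand.
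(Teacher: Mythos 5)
Your proof is correct and follows essentially the same route as the paper, which presents the proposition as an immediate corollary of Theorem B, Proposition \ref{p44}, Lemma \ref{l41} and Proposition \ref{c41}(d): establish that $H$ is $B$-embedded in $(G,\Tub)$, deduce (1) from closedness plus Lemma \ref{l41}, and (2) from the Comfort--Ross correspondence. The only (harmless) deviation is that you apply Proposition \ref{p44} directly to the arbitrary subgroup $H\supseteq\langle\uuu\rangle$ rather than first treating $H=\langle\uuu\rangle$ and then enlarging via Proposition \ref{c41}(d).
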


\begin{cor} Let $\uuu$ be a $TB$-sequence in an abelian group $G$ and $\tau$ be a group topology on $G$ such that $\Tub \leq \tau$. Then every
subgroup $H$ containing $\langle\uuu\rangle$ is dually closed in $\tau$.
\end{cor}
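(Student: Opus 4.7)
The plan is to observe that this corollary is essentially a direct transfer of Proposition \ref{p45}(1) from the topology $\Tub$ up to the finer topology $\tau$, using the elementary fact that enlarging a group topology can only enlarge the dual group.

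First, I would fix $x \in G \setminus H$ and apply Proposition \ref{p45}(1), which tells us that $H$ is dually closed in $(G, \Tub)$. This yields a character $\chi \in (G,\Tub)^{\ast}$ with $\chi(H) = \{0\}$ and $\chi(x) \neq 0$, producing the required separation already at the level of the $\Tub$-continuous dual.

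Next, I would invoke the hypothesis $\Tub \leq \tau$: the identity map $(G,\tau) \to (G,\Tub)$ is then continuous, so every $\Tub$-continuous character is automatically $\tau$-continuous, i.e.,
\[
(G,\Tub)^{\ast} \subseteq (G,\tau)^{\ast}.
\]
In particular, the character $\chi$ produced in the previous step belongs to $(G,\tau)^{\ast}$ and witnesses that $H$ is dually closed in $(G,\tau)$ at the point $x$. Since $x$ was arbitrary in $G \setminus H$, this completes the argument.

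There is really no obstacle here: the entire content is packed into Proposition \ref{p45}(1), and the only new ingredient is the trivial monotonicity $(G,\Tub)^{\ast} \subseteq (G,\tau)^{\ast}$ coming from $\Tub \leq \tau$. The corollary is best viewed as recording that the ``dually closed'' conclusion of Proposition \ref{p45} is stable under passing to any group topology on $G$ finer than $\Tub$ (for instance $\tau = \Tu$ when $\uuu$ is a $T$-sequence, which is the application one would typically have in mind).
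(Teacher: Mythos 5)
Your proof is correct and is exactly the argument the paper intends: the corollary is stated immediately after Proposition \ref{p45} with no separate proof, and the only additional ingredient is the monotonicity $(G,\Tub)^{\ast}\subseteq (G,\tau)^{\ast}$ for $\Tub\leq\tau$, which you supply explicitly. Nothing to add.
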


Taking the trivial sequence $\uuu =\{ 0 \}$ and $H=\{ 0\}$ in Proposition \ref{p45} we obtain a proof of the following well-known fact (cf. \cite[2.1]{CS}):

\begin{cor} \label{c42}
Let $G$ be an abelian group. Then every subgroup of $G^+$ is dually closed and dually embedded.
\end{cor}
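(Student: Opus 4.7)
The plan is to apply Proposition \ref{p45} in its most degenerate special case, namely to the trivial sequence $\uuu=(0,0,\ldots)$ in $G$, with $H$ allowed to be any subgroup. The first preliminary step would be to check that $\uuu$ is a $TB$-sequence: by Fact \ref{Crit} this amounts to showing that $s_\uuu((G_d)^\wedge)$ is dense in $(G_d)^\wedge$, and since $\chi(u_n)=\chi(0)=0$ for every character $\chi$ and every $n$, one gets $s_\uuu((G_d)^\wedge)=(G_d)^\wedge$, so density is tautological.

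The second step is to identify $\Tub$ with the Bohr topology $G^+=\sigma(G,(G_d)^\wedge)$. By definition, $\Tub$ is the finest precompact group topology on $G$ in which $\uuu$ converges to $0$; but every group topology makes the constant zero sequence converge to $0$, so $\Tub$ coincides with the finest precompact group topology on $G$. Via the bijective monotone correspondence of Fact \ref{f31}(a), this finest precompact topology corresponds to the largest dense subgroup of $(G_d)^\wedge$, namely $(G_d)^\wedge$ itself, which yields $\Tub=(G_d)^+ = G^+$.

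Finally, since $\langle\uuu\rangle=\{0\}$ is contained in every subgroup $H$ of $G$, Proposition \ref{p45}(1) applies to every such $H$ and gives that $H$ is dually closed and dually embedded in $(G,\Tub)=G^+$, which is exactly the conclusion of the corollary. There is no real obstacle here: all the substantive work has been packed into Proposition \ref{p45} (and, behind it, Theorem B and Lemma \ref{l41}), and the only subtlety is recognising that the constant zero sequence is the correct input and that its associated $TB$-topology coincides with the Bohr topology.
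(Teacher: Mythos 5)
Your proof is correct and is essentially the paper's own argument: the paper derives the corollary by taking the trivial sequence $\uuu=(0)$ in Proposition \ref{p45}, exactly as you do, and you have merely written out the (routine) verifications that $(0)$ is a $TB$-sequence and that $\tau_{\uuu}$ is the Bohr topology $G^+$. The only cosmetic difference is that you correctly let $H$ range over all subgroups, which is what the corollary requires.
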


\medskip

\noindent {\bf Proof of Theorem C.} ${\rm (i)} \Rightarrow {\rm (ii)}$ Let $\uuu$ be a $TB$-sequence in $G$ which characterizes $\tau$.
Putting $H=\langle\uuu\rangle$ assertion (ii) follows from (i) by Theorem B and Proposition \ref{p44}.

${\rm (ii)} \Rightarrow {\rm (i)}$ Let  $H$ be a countable $B$-embedded subgroup of $G$ and let $\uuu$ be a $TB$-sequence such that $(H,\tau|_H)=(H, \Tub(H))$. Let us show that $\tau =\Tub$.

The hypothesis (ii) means that  $\tau = (\tau_\mathbf{u}(H))^\ast$ is the finest  precompact extension of $\tau_\mathbf{u}(H)$. Then Proposition \ref{p44} implies that $\tau =\Tub$. $\Box$

\bigskip

\noindent {\bf Proof of Theorem A.} Assume that $G$ is $ss$-characterized. According to Theorem B, $G$ has a countable
$B$-embedded subgroup $H$. By Proposition \ref{c41}(c), $H$ must have finite index in $G$.  Thus $G$ is countable as well.

Conversely, if $G$ is countable, then $G$ is $ss$-characterized by Theorem \ref{MainTheorem}.

Let  $\uuu$ be a sequence which  characterizes $G$. By Proposition \ref{p45} $G/\langle\uuu\rangle$ carries the Bohr topology and it is
metrizable. By Proposition \ref{c41}(c), $\langle\uuu\rangle$ must have finite index. $\Box$

\bigskip

\noindent {\bf Proof of Corollary C1.} Assume that $G$ is $ss$-characterized. According to Theorem C, $G$ has a countable $B$-embedded subgroup $H$. So the group $G/H$ carries the Bohr topology. Since $G/H$ is also pseudocompact (as a quotient of $G$), it follows that $G/H$ is finite \cite{CS}. Hence $G$ is countable. Therefore $G$ is a countable pseudocompact group and hence it must be finite (as infinite pseudocompact groups are uncountable \cite{vanD}).

If $G$ is finite it is characterized by the trivial sequence $\uuu =(0)$. $\Box$
%\medskip

\begin{lemma} \label{l*}
Let $X$ be a Hausdorff countable space. Then $X$ is a $k$-space if and only if it is sequential.
\end{lemma}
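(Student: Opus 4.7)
The plan is to prove the two implications separately, with the forward direction (sequential $\Rightarrow$ $k$-space) being essentially trivial and well-known, while the substantive content lies in the reverse direction for countable Hausdorff spaces.

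For the easy direction, assume $X$ is sequential and let $A \subseteq X$ satisfy that $A \cap K$ is closed in $K$ for every compact $K \subseteq X$. To conclude $A$ is closed, by sequentiality it suffices to show $A$ is sequentially closed. Given $(x_n) \subseteq A$ with $x_n \to x$ in $X$, the set $K := \{x_n : n \in \N\} \cup \{x\}$ is compact (it is Hausdorff and every open cover is reduced to a finite subcover by picking one open set containing $x$, which covers all but finitely many $x_n$). Then $x \in \overline{A \cap K}^K = A \cap K \subseteq A$, as needed. This part uses only the Hausdorff assumption, not countability.

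For the hard direction, suppose $X$ is a countable Hausdorff $k$-space and let $A \subseteq X$ be sequentially closed. The goal is to show $A \cap K$ is closed in $K$ for every compact $K \subseteq X$; then the $k$-space hypothesis forces $A$ to be closed, proving $X$ is sequential. The key observation I need is the following: every compact subset $K$ of $X$ is first countable. Granting this, $K$ is sequential, and since sequential closedness of $A$ in $X$ trivially descends to sequential closedness of $A \cap K$ in $K$ (as $K$ is itself closed in the Hausdorff space $X$), we conclude $A \cap K$ is closed in $K$, as desired.

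The main obstacle is thus verifying that every countable compact Hausdorff space $K$ is first countable at each point $x$. I would argue as follows. For each $y \in K \setminus \{x\}$ use Hausdorffness to pick disjoint open sets $x \in U_y$ and $y \in V_y$; this yields an at most countable collection $\{U_y : y \neq x\}$. Given any open neighborhood $W$ of $x$, the closed set $K \setminus W$ is compact and covered by $\{V_y : y \in K \setminus W\}$, so finitely many $V_{y_1},\dots,V_{y_n}$ already cover $K \setminus W$. Then $U_{y_1} \cap \dots \cap U_{y_n}$ is an open neighborhood of $x$ disjoint from $V_{y_1}\cup \dots \cup V_{y_n} \supseteq K\setminus W$, hence contained in $W$. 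This produces a countable neighborhood base at $x$, completing the proof. No further countability of $X$ itself (beyond what makes $K$ countable) is needed for this compact-subspace argument.
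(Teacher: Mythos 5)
Your proof is correct, and it follows the same overall skeleton as the paper's --- reduce the converse implication to showing that compact subsets of a countable Hausdorff space are nice enough to detect sequential closure --- but it is carried out more elementarily and self-containedly. The paper simply cites Engelking for ``sequential $\Rightarrow$ $k$-space'', invokes the fact that a countable compact Hausdorff space is \emph{metrizable} (Engelking 3.1.21), and then closes the argument by citing Lemma 1.5 of \cite{CMT2} (a $k$-space whose compact subsets are metrizable is sequential). You instead prove the easy direction by hand via the compactness of $\{x_n\}\cup\{x\}$, replace metrizability of the compact subsets by the weaker but entirely sufficient property of \emph{first countability}, which you establish directly with the standard separation-plus-compactness argument, and then finish the deduction yourself: a sequentially closed $A$ meets each (first countable, hence sequential) compact $K$ in a sequentially closed, hence closed, subset of $K$, so the $k$-space property forces $A$ to be closed. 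What the paper's route buys is brevity through citations and the stronger structural conclusion that compact subsets are metrizable; what your route buys is a fully self-contained proof that isolates exactly the property needed (first countability of compact subsets) and avoids Urysohn-type metrization machinery. One small remark: your parenthetical that $K$ is closed in $X$ is not needed for sequential closedness of $A\cap K$ in $K$ --- a sequence in $A\cap K$ converging in $K$ already converges in $X$ to a point of $K$, which lies in $A$ by sequential closedness of $A$.
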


\begin{proof} It is well known that every sequential space is a $k$-space \cite[3.3.20]{Eng}.

Let $X$ be a $k$-space and $K$ a compact subset of $X$. Being countable, $K$ is metrizable \cite[3.1.21]{Eng}. Thus $X$ is sequential by Lemma 1.5 of \cite{CMT2}.
\end{proof}

\medskip

\noindent {\bf Proof of Corollary C2.} Let $(G,\tau)$ be $ss$-characterized precompact  group, for which $(G,\tau)$ is a $k$-space. Take a sequence $\uuu$ in $G$  which  characterizes $\tau$ and set $H=\langle \uuu \rangle$. Then the quotient  group $(G/H, \tau/H)$ carries the Bohr topology by Theorem C. Thus $(G/H, \tau/H)$ has no infinite compact subsets. From this, since $(G/H, \tau/H)$ is a $k$-space as well, we get  that $(G/H, \tau/H)$ is discrete. Therefore, $(G/H, \tau/H)$ is a discrete precompact group. Hence, $G/H$ is finite. Since $H$ is countable, we get that $G$ is countable as well. By Lemma \ref{l*}, $G$ is sequential.
$\Box$

\bigskip

\noindent {\bf Proof of Corollary C3.} Metrizable precompact groups have countable weight, so they are Arhangel$'$ski\u \i \ groups.

Now assume that $G$ is an uncountable $ss$-characterized precompact group. By Theorem C, there exists a $B$-embedded countable subgroup
$H$ of $G$. Then $|G/H| = |G|$, so by Proposition  \ref{c41} $w(G/H) = 2^{|G/H|}=2^{|G|} > |G|$. Since $w(G) \geq w(G/H)$, this proves that $w(G) > |G|$, i.e., $G $ is not   an Arhangel$'$ski\u \i \ group.
$\Box$

\subsection{Sequential completeness} According to \cite{DT1}, a topological group $G$ is said to be {\em
sequentially complete}, if every Cauchy sequence in $G$ is convergent (or, equivalently, when $G$ is sequentially closed in its two-sided completion).  For basic properties of sequentially complete groups see \cite{DT1,DT2}.

By Remark \ref{zprot} for every $T$-sequence $\uuu$ in an infinite abelian group $G$ the group $(G,\Tu)$ is sequential and complete. By Corollary C1,  if $G$ is an infinite abelian group  and $\uuu$ is a $TB$-sequence in $G$, then the group $(G,\Tub)$ cannot be complete. So it is natural to ask whether the group $(G,\Tub)$ is sequentially complete.  In spite of the property to be sequential complete is not a three space property in general \cite{BT}, we can prove the following.

\begin{pro} \label{sc}
Let $H$  be a closed subgroup of a topological abelian group $G$. If $G/H$ has no non-trivial convergent sequences, then $G$ is sequentially complete if and only if $H$ is sequentially complete.
\end{pro}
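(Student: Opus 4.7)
The plan is to prove both implications directly. The hypothesis that $G/H$ has no non-trivial convergent sequences will only be needed for the backward direction; the forward direction uses only closedness of $H$.

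For the forward direction, assume $G$ is sequentially complete. Given a Cauchy sequence $(h_n)$ in $H$, I would note that since $H$ carries the topology induced from $G$, the sequence $(h_n)$ is also Cauchy in $G$, hence convergent in $G$ to some $g$. Because $H$ is closed, $g\in H$, proving that $H$ is sequentially complete.

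For the backward direction, assume $H$ is sequentially complete and $G/H$ has no non-trivial convergent sequences. Let $(x_n)$ be Cauchy in $G$ and let $\pi:G\to G/H$ be the canonical projection. The pivotal observation is that Cauchyness implies $x_{n+1}-x_n\to 0$ in $G$, and continuity of $\pi$ then gives $\pi(x_{n+1})-\pi(x_n)\to 0$ in $G/H$. Since $G/H$ admits no non-trivial convergent sequences, any sequence converging to $0$ must be eventually $0$; hence there exists $N$ with $\pi(x_n)=\pi(x_N)$ for all $n\geq N$. Therefore $y_n:=x_n-x_N\in H$ for every $n\geq N$, and from $y_n-y_m=x_n-x_m$ we see $(y_n)_{n\geq N}$ is Cauchy in $G$; as its terms lie in $H$ and $H$ carries the subspace uniformity, it is also Cauchy in $H$. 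Sequential completeness of $H$ yields a limit $h\in H$, whence $x_n=x_N+y_n\to x_N+h$ in $G$, so $G$ is sequentially complete.

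I do not anticipate any serious obstacle: the whole argument hinges on exploiting $(\pi(x_{n+1})-\pi(x_n))$ rather than the raw sequence $(\pi(x_n))$ (which is only Cauchy, and not obviously convergent in the absence of completeness of $G/H$) in order to apply the hypothesis on $G/H$. The only routine checks are that Cauchyness for sequences in $H$ relative to the subspace topology coincides with Cauchyness in $G$, and the standard reformulation that ``no non-trivial convergent sequences'' means every convergent sequence is eventually constant.
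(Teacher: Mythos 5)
Your proof is correct. It follows the same overall strategy as the paper's proof of Proposition \ref{sc} (push the sequence down to the quotient, use the hypothesis to conclude that it eventually lies in a single coset of $H$, translate into $H$, and invoke sequential completeness of $H$; the easy direction uses only closedness of $H$), but it differs in one genuine respect: the paper passes to the completion $\bar G$, takes a sequence of $G$ converging to a point $\bar g\in\bar G$, and works in $\bar G/\bar H$ where $\bar H=\mathrm{cl}_{\bar G}(H)$, asserting that the image sequence is trivial ``by hypothesis''; strictly speaking the hypothesis concerns $G/H$, the image sequence lives in $q(G)\cong G/H$ while its limit $q(\bar g)$ may a priori lie outside $q(G)$, so one really needs to apply the hypothesis to the consecutive differences, which do converge to $0$ inside $q(G)$. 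Your argument sidesteps this entirely: by working with Cauchy sequences intrinsically in $G$ and applying the no-nontrivial-convergent-sequences hypothesis to $\pi(x_{n+1})-\pi(x_n)\to 0$ in $G/H$ itself, you never leave the groups to which the hypothesis literally applies and you avoid the completion altogether. What the paper's route buys is the clean reformulation of sequential completeness as sequential closedness in the completion; what yours buys is a more elementary and, at the key step, more airtight argument. One small point worth making explicit in your write-up: ``eventually constant and converging to $0$ implies eventually $0$'' uses that $G/H$ is Hausdorff, which holds because $H$ is closed.
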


\begin{proof} Assume that $H$ is sequentially complete. Let $\bar{G}$ be the completion of $G$, $\bar{H} =\mathrm{cl}_{\bar{G}} (H)$ and let $q:\bar{G} \to \bar{G}/\bar{H}$ be the quotient map. To check that $G$ is sequentially closed in $\bar{G}$ pick a sequence $\vvv =(v_n)$ in $G$ converging to an element $\bar{g}\in \bar{G}$. Then $q(\vvv)$ converges to $q(\bar{g})$. By hypothesis, $q(\vvv)$ is trivial. Hence there exists  $k \in \N$ such that $q(v_n) = q(v_k)$  for all $n \geq k$. Hence $h_n:= v_n - v_k\in
\bar{H}$ for all $n \geq k$. As $h_n \in G$ as well, we deduce that $ h_n \in G \cap \bar{H} =H$. As $h_n $ converges to $\bar{g} - v_k$
in $\bar{H} $ and $H$ is sequentially complete, we have $\bar{g} - v_k\in H$. Thus $\bar{g}\in G$. Therefore $G$ is sequentially closed in $\bar{G}$.

If $G$ is sequentially complete, then $H$, as a closed subgroup of $G$, is sequentially complete as well.
\end{proof}

\begin{cor} \label{Feb12} Let $\uuu$ be a $TB$-sequences in an abelian group $G$. Then $(G,\Tub)$ is sequentially complete if and only if the countable
subgroup $(\langle\uuu\rangle,\Tub|_{\langle\uuu\rangle})$ is sequentially complete.
\end{cor}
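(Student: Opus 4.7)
The plan is to deduce the corollary from Proposition \ref{sc} by choosing $H := \langle\uuu\rangle$, the countable subgroup of $G$ generated by $\uuu$, equipped with the topology induced by $\Tub$. First I would invoke Proposition \ref{p44}, which gives $\Tub|_{\langle\uuu\rangle} = \Tub(\langle\uuu\rangle)$; consequently, the sequential completeness of $(\langle\uuu\rangle, \Tub|_{\langle\uuu\rangle})$ is literally the same statement as the sequential completeness appearing on the right-hand side of the corollary.

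To apply Proposition \ref{sc} I need to verify its two hypotheses for $(G,\Tub)$ and $H = \langle\uuu\rangle$. The closedness of $H$ in $(G,\Tub)$ is immediate from Proposition \ref{p45}(1), which says that $H$ is dually closed (so in particular closed) in $(G,\Tub)$. The remaining hypothesis, namely that the quotient $G/H$ has no non-trivial convergent sequences, is where I would use Proposition \ref{p45}(2): the quotient topology on $G/H$ inherited from $\Tub$ coincides with the Bohr topology $\bigl((G/H)_d\bigr)^+$.

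The main (and only non-routine) ingredient is then the classical fact that the Bohr topology of any discrete abelian group contains no non-trivial convergent sequences. This is the result of Flor (and is well documented in the literature on the Bohr compactification of discrete abelian groups), which I would cite rather than reprove. This is the step I expect to be the crux of the argument; once it is invoked, Proposition \ref{sc} yields directly the equivalence between sequential completeness of $(G,\Tub)$ and that of $(H,\Tub|_H) = (\langle\uuu\rangle, \Tub(\langle\uuu\rangle))$, completing the proof.
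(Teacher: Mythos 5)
Your proposal is correct and follows exactly the route the paper intends: Corollary \ref{Feb12} is stated immediately after Proposition \ref{sc} and is meant to be obtained by taking $H=\langle\uuu\rangle$, using Proposition \ref{p45} to see that $H$ is closed and that $G/H$ carries the Bohr topology, and invoking the classical fact (used elsewhere in the paper, e.g.\ in the proof of Corollary C2) that the Bohr topology of a discrete abelian group has no non-trivial convergent sequences. Your additional appeal to Proposition \ref{p44} to identify $\Tub|_{\langle\uuu\rangle}$ with $\Tub(\langle\uuu\rangle)$ is harmless and consistent with the paper.
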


We are not aware whether such groups can be sequentially complete (see Problem \ref{seq_complete}).

\subsection{$T$-sequences and local quasi-convexity}

Let $\uuu$ be a $TB$-sequence in a group $G$.
 Since $\Tub$ is locally quasi-convex, we have $\Tub\leq (\Tu)_{\mathrm{lqc}}$.

Now let $\mathbf{u}$ and $\mathbf{v}$ be  $TB$-sequences in an infinite
abelian group $G$. By  Fact \ref{f13}, if $\mathcal T_{\bf u}
=\mathcal T_{\bf v}$, then $\tau_{\bf u} =\tau_{\bf v}$. The next
theorem gives a particular answer to the question whether the
converse assertion is true.
\begin{theorem} \label{t42}
Let $\mathbf{u}$ and $\mathbf{v}$ be  $TB$-sequences in an infinite
abelian group $G$. Assume that
\begin{enumerate}
\item[{\rm 1.}] $\Tu$ and $\Tv$ are locally quasi-convex;
\item[{\rm 2.}]$\Tub =\Tvb$ .
\end{enumerate}
Then $\Tu=\Tv$.
\end{theorem}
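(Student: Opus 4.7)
The plan is to prove $\Tu=\Tv$ by first establishing $\Tv\leq\Tu$, from which the reverse inclusion will follow by symmetry. By the maximality characterizing $\Tu$ as the finest Hausdorff group topology on $G$ making $\uuu$ converge to zero, the inclusion $\Tv\leq\Tu$ is equivalent to the assertion that $\uuu\to 0$ in $\Tv$. A naive attempt --- deducing this from $\uuu\to 0$ in the coarser topology $\Tub=\Tvb\leq\Tv$ --- fails, since convergence in a coarser topology need not persist in a finer one; this is precisely where hypothesis~(1) on local quasi-convexity must enter.

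The first concrete step is to apply Fact~\ref{f13} to both $\uuu$ and $\vvv$. Combined with hypothesis~(2), this yields
\[
(G,\Tu)^\ast=(G,\Tv)^\ast=H, \qquad H:=s_\uuu((G_d)^\wedge)=s_\vvv((G_d)^\wedge),
\]
so that the pointwise convergence $\chi(u_n)\to 0$ in $\T$ is available for every $\chi\in H$. Next, since $\Tv$ is locally quasi-convex with dual $H$, a base of $\Tv$-neighborhoods of zero is formed by the polars of $\Tv$-equicontinuous subsets $E\subseteq H$. Hence the claim $\uuu\to 0$ in $\Tv$ reduces to showing that the pointwise convergence $\chi(u_n)\to 0$ is in fact uniform on every $\Tv$-equicontinuous $E\subseteq H$.

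Upgrading pointwise to uniform convergence on $\Tv$-equicontinuous subsets is the crux and the main obstacle. Local quasi-convexity of $\Tv$ is essential here in two ways: first, it shapes the neighborhood base in terms of polars of equicontinuous subsets of the dual; second, it permits one to invoke the standard duality fact that $\Tv$-equicontinuous subsets of $H$ are precompact in a suitable topology on $H$ (e.g., the polar topology determined by $\Tv$-compact subsets of $G$). Combined with $\vvv\to 0$ in $\Tv$, which gives uniform convergence $\chi(v_n)\to 0$ on each $\Tv$-equicontinuous $E$, an argument of Ascoli/Grothendieck type should transfer the uniformity to $(u_n)$ and yield $\uuu\to 0$ in $\Tv$. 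Once this is established, maximality of $\Tu$ gives $\Tv\leq\Tu$, and the symmetric argument (using now local quasi-convexity of $\Tu$) yields $\Tu\leq\Tv$, so $\Tu=\Tv$. The single delicate step is this uniform-convergence upgrade; the rest of the proof is formal and uses only the maximality of $\Tu$ and $\Tv$ together with the common-dual consequence of Fact~\ref{f13} and hypothesis~(2).
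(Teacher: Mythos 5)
Your reduction of $\Tv\le\Tu$ to the claim that $\uuu\to 0$ in $\Tv$, and of the latter to uniform convergence of $\chi(u_n)\to 0$ on $\Tv$-equicontinuous subsets of $H$, is correct as far as it goes; but the step you yourself identify as the crux --- upgrading pointwise to uniform convergence on equicontinuous sets --- is a genuine gap, and the general principle you are invoking is false. What you need is: in a locally quasi-convex group $(G,\Tv)$ with dual $H$, every $\sigma(G,H)$-null sequence is $\Tv$-null. This fails even for very nice locally quasi-convex groups: in the additive group of $\ell^2$ the continuous characters correspond to elements of $\ell^2$ itself, the standard basis $(e_n)$ is pointwise null on the dual (it is weakly null), yet it is not null in the norm topology. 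An Ascoli/Grothendieck appeal does not close the gap: although an equicontinuous $E\subseteq H$ is relatively compact in the compact-open topology, pointwise convergence of the continuous functions $\hat u_n$ on the compact set $\overline{E}$ yields uniform convergence only if the family $\{\hat u_n\}$ is itself equicontinuous there, and that is precisely what is not known (it would essentially require $\{u_n\}$ to lie in the bipolar of a $\Tv$-compact set, i.e., something very close to the conclusion you are after). The paper's own closing remarks, pointing out that $\Tub$ can have strictly more null sequences than $\Tu$, show how delicate this passage is.

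The paper's proof takes a completely different, ``reflexivity''-type route: after reducing to countable $G$ (via the open subgroup $\langle\uuu\rangle+\langle\vvv\rangle$ and Corollary \ref{c45}), it notes that $(G,\Tu)^\wedge$ and $(G,\Tv)^\wedge$ are Polish groups (by \cite{Ga}) carried by the same Borel subgroup $H$ of the compact metrizable group $(G_d)^\wedge$, so by uniqueness of Polish group topologies they coincide topologically; hence the biduals coincide and the canonical maps $\alpha_{(G,\Tu)}=\alpha_{(G,\Tv)}$, and since $\Tu$ and $\Tv$ are locally quasi-convex $k$-spaces (Remark \ref{zprot}), both canonical maps are embeddings by Au{\ss}enhofer's results, forcing $\Tu=\Tv$. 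If you want to pursue your route, you must inject comparably strong structural information about $\Tu$ and $\Tv$ at the crux; local quasi-convexity together with equality of the duals is not enough.
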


\begin{proof}
Set $H=\langle \uuu\rangle+\langle {\bf v}\rangle$. If $H$ is finite, the theorem is trivial. Let $H$ be countably infinite. Since
$H$ is open in both the topologies $\Tu$ and $\Tv$ and since $\Tub(H) =\Tvb(H)$ by Corollary \ref{c45}, we can assume that $G$ is countably infinite.

Let $i_\mathbf{u} : G_d \to (G,\Tu)$ and $i_\mathbf{v} : G_d \to (G,\Tv)$ be the identity maps. Then, by item 2 and Fact \ref{f13}, $i_\mathbf{u}^\wedge (G)= i_\mathbf{v}^\wedge (G):=H$ as a subgroup of the compact metrizable group $(G_d)^\wedge$. By \cite{Ga}, $(G,\Tu)^\wedge$ and $(G,\Tv)^\wedge$ are Polish groups. So, the Borel subgroup $H$ of
$(G_d)^\wedge$ admits two finer Polish group topologies. By the uniqueness of Polish group topology, we obtain that $(G,\Tu)^\wedge =(G,\Tv)^\wedge$ topologically. Hence
\[
(G,\Tu)^{\wedge\wedge} =(G,\Tv)^{\wedge\wedge} \mbox{
topologically and } \alpha_{(G,\Tu)} =\alpha_{(G,\Tv)} \mbox{ as algebraic homomorphisms}.
\]
Since $\Tu$ and $\Tv$ are locally quasi-convex and  $k$-spaces by Remark \ref{zprot},
$\alpha_{(G,\Tu)}$ and $\alpha_{(G,\Tv)}$ are embedding by \cite[5.12 and 6.10]{Aus}. Thus $\Tu =\Tv$.
\end{proof}

\section{Final remarks and problems}

Clearly, if a non-trivial $TB$-sequence $\uuu$ in an infinite abelian group $G$ is such that $(\langle\uuu\rangle, \Tub)$ is
metrizable, the countably infinite group $(\langle\uuu\rangle, \Tub)$ is not sequentially complete. By Proposition
\ref{p45} $(\langle\uuu\rangle, \Tub)$ is closed in $(G,\Tub)$. So $(G,\Tub)$ is not sequentially complete by Proposition \ref{sc}.
Therefore, the following question is of independent interest.

\begin{question}\label{seq_complete}
\begin{itemize}
\item[(a)] Is there countably infinite sequentially complete  precompact $ss$-characterized group{\rm ?}
 \item[(b)] In particular, if $\uuu$ is the sequence $(n!)$ in $\Z$, is $(\Z, \tau_\uuu)$ sequentially complete{\rm ?}
\end{itemize}
\end{question}

The assumption on $\Tu$ and $\Tv$ to be locally quasi-convex in Theorem \ref{t42} is essential. As it was noticed in
\cite{Ga3}, in general, the converse assertion is not true, i.e., from the equality $\Tub =\Tvb$ it does not follow that $\Tu
=\Tv$. More precisely, in that example $\bf v$ converges to zero in $\Tub$, but $\bf v$ is not a null-sequence in
$\Tu$. So $\Tub$ may have essentially more converging sequences than $\Tu$. We end this discussion by the following questions.

\begin{question}
Let $\uuu$  be a $TB$-sequence in an infinite abelian group $G$. Do the groups $(G,\Tub)$ and $\left(G, (\Tu)_{\mathrm{lqc}}\right)$ have the same null sequences (or compact subsets){\rm ?}
\end{question}

\begin{question}
Let $\uuu$  be a $TB$-sequence in an infinite abelian group $G$ such that $(G,\Tub)$ is not metrizable. Does there exist a LCQ-group topology $\tau$ on $G$ such that   $\Tub < \tau <(\Tu)_{\mathrm{lqc}}$ (cf. Proposition \ref{p21}){\rm ?}
\end{question}

\begin{question}
Let $\uuu$ and $\bf v$ be $TB$-sequences in an infinite abelian group $G$ such that $\Tub = \Tvb$. Is $(\Tu)_{\mathrm{lqc}} =(\mathcal
T_{\bf v})_{\mathrm{lqc}}${\rm ?}
\end{question}

The next problem should be compared with Proposition \ref{c41} and Corollary \ref{NEEW}:

\begin{question} \label{p_today}
Prove or disprove that every $ss-$characterized precompact group is totally disconnected (zero-dimenional).
\end{question}

 Let us note that the difference between totally disconnected and hereditarily disconnected groups  is very subtle. While always the implications\\

\centerline{zero-dimenional \ $\Longrightarrow $	 \ totally disconnected \ $\Longrightarrow$ \ hereditarily disconnected}

\medskip

\noindent hold true, the inverse implications may fail in general (but hold true in locally compact or in countably compact groups
\cite{D0}). A careful analysis of the short argument outlined in front of Corollary \ref{NEEW} shows that
$Q(Q(G)) = 0$ for every $ss$-characterized precompact group. This is stronger than the current statement of that corollary, since the ordinal chain
\begin{equation} \label{d}
Q(G) \supseteq Q(Q(G)) \supseteq Q(Q(Q(G)))  \supseteq \ldots
\end{equation}
has as intersection the connected component $c(G)$ of the group $G$ (note that for every ordinal $\alpha$ one may find even a pseudocompact abelian group $G_\alpha$ for which the ordinal chain (\ref{d}) has length exactly $ \alpha$ \cite{D-1}). Hence, in some sense the actual result $Q(Q(G)) = 0$ gives a good evidence that $ss$-characterized precompact group may be totally disconnected.

The next questions are motivated by Corollary C2 and by the fact
that the topologies determined by a non-trivial $T$-sequence are
always sequential, but never Fr\'echet-Urysohn (see Remark
\ref{zprot})
% \cite{PZ}.

\begin{question} \label{p2}
Let $(G,\tau)$ be a countable precompact ss-characterized group. Can $(G,\tau)$
be sequential, but not Fr\'echet-Urysohn{\rm ?} Can $(G,\tau)$ be
Fr\'echet-Urysohn, but not metrizable{\rm ?}
\end{question}

\end{document}